\documentclass[11pt]{article}

\usepackage{geometry}
\usepackage{jhtitle,amsbsy,amsmath,amsthm,amssymb,times,graphicx}
\usepackage{probmacros}
\usepackage{natbib}
\usepackage{enumerate}
\usepackage{float}
\usepackage{authblk}
\usepackage{tikz-cd}

\newcommand{\vertiii}[1]{{\left\vert\kern-0.25ex\left\vert\kern-0.25ex\left\vert #1 
		\right\vert\kern-0.25ex\right\vert\kern-0.25ex\right\vert}}

\def\baro{\vskip  .2truecm\hfill \hrule height.5pt \vskip  .2truecm}
\def\barba{\vskip -.1truecm\hfill \hrule height.5pt \vskip .4truecm}

\newtheorem{theorem}{Theorem}
\newtheorem{lemma}{Lemma}

\newtheorem{remark}{Remark}
\newtheorem{proposition}{Proposition}

\newcommand{\Z}{{\mathsf{Z}}}

\geometry{left=2.5cm,right=2.5cm,top=2.5cm,bottom=2.5cm}

%\author{Grant Backlund, James P. Hobert, Yeun Ji Jung, and Kshitij
%  Khare \\ Department of Statistics \\ University of Florida}
%  \date{May 2018} \linespread{1.52}

\author[1]{Grant Backlund}
\author[1]{James P. Hobert}
\author[2]{Yeun Ji Jung}
\author[1]{Kshitij Khare}
\affil[1]{Department of Statistics, University of Florida}
\affil[2]{Model Governance Group, JPMorgan Chase \& Co (New York)}
\date{December 2019}
\linespread{1.4}

\begin{document}

\title{A Hybrid Scan Gibbs Sampler for Bayesian Models \\ with Latent
  Variables}

\keywords{Geometric ergodicity, heavy-tailed errors, linear mixed
  model, Markov chain Monte Carlo, sandwich algorithm, shrinkage
  prior}

\maketitle

\begin{abstract}
Gibbs sampling is a widely popular Markov chain Monte Carlo algorithm
that can be used to analyze intractable posterior distributions
associated with Bayesian hierarchical models.  There are two standard
versions of the Gibbs sampler: The systematic scan (SS) version, where
all variables are updated at each iteration, and the random scan (RS)
version, where a single, randomly selected variable is updated at each
iteration.  The literature comparing the theoretical properties of SS
and RS Gibbs samplers is reviewed, and an alternative \textit{hybrid
  scan} Gibbs sampler is introduced, which is particularly well suited
to Bayesian models with latent variables.  The word ``hybrid''
reflects the fact that the scan used within this algorithm has both
systematic and random elements.  Indeed, at each iteration, one
updates the entire set of latent variables, along with a randomly
chosen block of the remaining variables.  The hybrid scan (HS) Gibbs
sampler has important advantages over the two standard scan Gibbs
samplers.  Firstly, the HS algorithm is often easier to analyze from a
theoretical standpoint.  In particular, it can be much easier to
establish the geometric ergodicity of a HS Gibbs Markov chain than to
do the same for the corresponding SS and RS versions.  Secondly, the
sandwich methodology developed in \citet{hobert2008theoretical}, which
is also reviewed, can be applied to the HS Gibbs algorithm (but not to
the standard scan Gibbs samplers).  It is shown that, under weak
regularity conditions, adding sandwich steps to the HS Gibbs sampler
always results in a theoretically superior algorithm.  Three specific
Bayesian hierarchical models of varying complexity are used to
illustrate the results.  One is a simple location-scale model for data
from the Student's $t$ distribution, which is used as a pedagogical
tool.  The other two are sophisticated, yet practical Bayesian
regression models.
\end{abstract}

\section{Introduction}
\label{sec:intro}

Let $f_U : \mathsf{U} \to [0,\infty)$ be an intractable target
density, and suppose that
$f : \mathsf{U} \times \mathsf{Z} \to [0,\infty)$ is a joint density
whose $u$-marginal is the target, i.e.,
$\int_{\mathsf{Z}} f(u,z) \, dz = f_U(u).$ Think of $U$ as the
parameters in a Bayesian model, and $Z$ as latent data. If
straightforward sampling from the associated conditional densities is
possible, then we can use the data augmentation (DA) algorithm to
explore $f_U.$ Of course, running the algorithm entails alternating
between draws from $f_{Z | U}$ and $f_{U | Z},$ which simulates the
Markov chain whose Markov transition density (Mtd) is
\[
k_{\text{DA}} (u' | u) = \int_{\mathsf{Z}} f_{U | Z} (u'
| z) f_{Z|U}(z | u) \, dz \;.
\]
It's easy to see that $k_{\text{DA}} (u' | u) f_U(u)$ is
symmetric in $(u, u'),$ so the DA Markov chain is reversible with
respect to $f_U$.

To fix ideas, we introduce a simple example.  Let $W_1, \ldots, W_m$
be a random sample from the location-scale Student's $t$ distribution
with known degrees of freedom, $\nu > 0$, and consider a Bayesian
model with prior density given by
$\pi(\mu,\sigma^2) \propto \sigma^{-2} I_{\mathbb{R}_+}(\sigma^2)$,
where $\reals_+ := (0,\infty)$.  This is a standard diffuse prior for
location-scale problems.  We assume throughout that $m \geq 2$, which
is necessary and sufficient for posterior propriety.  The resulting
posterior density is an intractable bivariate density characterized by
\begin{equation}
\label{eq:toyposterior}
  f_U(\mu, \sigma^2) \propto (\sigma^2)^{- \frac{m+2}{2}}
  \prod_{i=1}^m \left( 1 + \frac{(w_i - \mu)^2}{\nu \sigma^2}
  \right)^{-(\nu + 1)/2} I_{\mathbb{R}^+}(\sigma^2) \;.
\end{equation}
So here the role of $U$ is being played by $(\mu,\sigma^2)$, and, in
order to keep the notation consistent, we are suppressing the
dependence on the data, $w = (w_1,\ldots,w_m)$.  There is a simple DA
algorithm for this problem that is based on the standard
representation of a Student's $t$ variate in terms of normal and
$\chi^2$ variates.  Conditional on $(\mu,\sigma^2)$, let
$(W_1,Z_1),\ldots,(W_m,Z_m)$ be independent and identically
distributed (iid) pairs such that, for $i=1,\ldots,m$,
\begin{align*}
& W_i | Z_i , \mu , \sigma^2 \sim \text{N}(\mu, \sigma^2 / z_i) \\ &
  Z_i |\mu, \sigma^2 \sim \text{Gamma}(\nu / 2, \nu / 2) \;.
\end{align*}
Letting $z = (z_1,\ldots,z_m) \in \reals^m_+$, the joint density of
$\{ (W_i,Z_i) \}_{i=1}^m$ is given by
\[
p(w,z | \mu, \sigma^2) = \prod_{i=1}^m \frac{\sqrt{z_i}}{\sqrt{2 \pi
    \sigma^2}} \exp \left\lbrace -\frac{z_i}{2 \sigma^2} (w_i - \mu)^2
\right\rbrace \frac{\left( \frac{\nu}{2}
  \right)^{\frac{\nu}{2}}}{\Gamma \left( \frac{\nu}{2} \right)}
z_i^{\frac{\nu}{2} - 1} \exp \left\lbrace - \frac{\nu z_i}{2}
\right\rbrace I_{\mathbb{R}^+}(z_i)\;.
\]
Now define $f(\mu,\sigma^2,z) \propto p(w,z | \mu, \sigma^2)
\pi(\mu,\sigma^2)$.  Then it's easy to see that
\[
\int_{\reals^m_+} f(\mu,\sigma^2,z) \, dz = f_U(\mu, \sigma^2) \;,
\]
so $z$ is indeed latent data.  It is straightforward to sample from
$f_{Z|U}(z|\mu,\sigma^2)$ since the $Z_i$ are conditionally
independent, each with a gamma distribution.  It's also easy to sample
from $f_{U|Z}(\mu,\sigma^2|z)$ (sequentially) because both
$f_{\sigma^2|Z}(\sigma^2|z)$ and $f_{\mu|\sigma^2,Z}(\mu|\sigma^2,z)$
have simple forms \citep[see, e.g.,][Section
  10.1]{hobert2011handbook}.  We will return to this simple example
several times in order to illustrate certain ideas without having to
wade through the heavy notation associated with a more sophisticated
model.  Now back to the general case.

There are many situations in which useful latent data exist, but the
DA algorithm is not directly applicable.  Specifically, it is often
the case that it \textit{is} possible to draw from $f_{Z|U},$ but it
is not possible to draw from $f_{U|Z}.$ On the other hand, in such
cases, one can sometimes break $u$ into two pieces, $u = (x,y)$, where
$x \in \mathsf{X}, y \in \mathsf{Y}, \mathsf{U} = \mathsf{X} \times
\mathsf{Y},$ in such a way that one is able to draw from $f_{X|Y,Z}$
and from $f_{Y|X,Z}$.  In such cases, we can run a three-block Gibbs
sampler based on $f_{X|Y,Z}$, $f_{Y|X,Z}$ and $f_{Z|X,Y}$.  Of course,
the random scan (RS) version of this Gibbs sampler is reversible,
while the systematic scan (SS) version is not.

Consider again our simple Student's $t$ example, and suppose we change
the prior to
\[
\pi^*(\mu,\sigma^2) \propto \sigma^{-2} \exp \Big \{ -\frac{1}{2} (\mu -
\gamma)^2 \Big \} I_{\mathbb{R}^+}(\sigma^2) \;,
\]
where $\gamma \in \mathbb{R}$ is fixed.  In this case,
$(\mu,\sigma^2)$ is playing the role of $(x,y)$.  Under the new prior,
drawing from $(\mu,\sigma^2)|z$ is no longer straightforward, because
the distribution of $\sigma^2|z$ is nonstandard.  Hence, while the DA
algorithm is still technically implementable (using, say, a rejection
sampler for $\sigma^2|z$), it is much less attractive under the new
prior.  On the other hand, the conditional densities of
$\mu|(\sigma^2,z)$, $\sigma^2|(\mu,z)$, and $z|(\mu,\sigma^2)$ all
have standard forms, so the three-block Gibbs sampler would be easy to
run.

In this paper, we consider an alternative to the SS and RS three-block
Gibbs samplers.  We call it the \textit{hybrid scan Gibbs sampler}.
Fix $r \in (0,1)$ to play the role of a selection
probability. Consider a Markov chain $\{(X_n,Y_n) \}_{n=0}^{\infty}$
with state space $\mathsf{X} \times \mathsf{Y}$ that evolves as
follows. If the current state is $(X_n,Y_n) = (x,y),$ then we simulate
the new state, $(X_{n+1},Y_{n+1}),$ using the following two-step
procedure.

\baro \vspace*{2mm}
\noindent {\rm Iteration $n+1$ of the hybrid scan Gibbs sampler:}
\begin{enumerate}
\item Draw $Z \sim f_{Z|X,Y}(\cdot|x,y),$ call the result $z,$ and,
  independently, draw $W \sim \text{Uniform}(0,1).$

\item 
\begin{enumerate}
\item If $W \le r$, draw $X^* \sim f_{X|Y,Z}(\cdot|y,z),$ and set
  $(X_{n+1},Y_{n+1}) = (X^*,y)$.

\item Otherwise, draw $Y^* \sim f_{Y|X,Z}(\cdot|x,z),$ and set
  $(X_{n+1},Y_{n+1}) = (x,Y^*)$.
\end{enumerate}
\end{enumerate}
\vspace*{-3mm}
\barba
\bigskip

A standard SS Gibbs sampler based on $f_{X|Y,Z}$, $f_{Y|X,Z}$ and
$f_{Z|X,Y}$ updates all three components (in the same prespecified
order) at each iteration.  To run the RS version, we first fix three
selection probabilities $r_1, r_2, r_3 \in (0,1)$ such that
$r_1+r_2+r_3=1$.  Then, at each iteration, we draw from exactly one of
the three full conditionals according to the probabilities $r_1$,
$r_2$ and $r_3$, and leave the remaining two components fixed.  So
hybrid scan (HS) Gibbs can be viewed as a compromise between these
standard Gibbs samplers in the sense that, at each iteration of HS
Gibbs, exactly two of the three full conditionals are sampled.  The
idea of including both systematic and random scan ingredients in a
single Markov chain Monte Carlo (MCMC) algorithm is not new
\citep[see, e.g.,][]{levine2005note}, but we believe that this is the
first concentrated study of this particular algorithm.

The reader will note that we have yet to demonstrate that the HS Gibbs
sampler is actually valid.  In fact, it follows directly from one of
the general results in Appendix~\ref{app:theory} (Proposition 1) that
the Markov chain associated with HS Gibbs is reversible with respect
to $f_{X,Y}(x,y)$ for any $r \in (0,1)$, which implies that the
algorithm is valid.  Proposition 1 is rather technical, and its proof
is based on operator theory.  Fortunately, there is a much simpler way
to establish the desired reversibility.  Indeed, we now show that HS
Gibbs is equivalent to a RS, variable-at-a-time Metropolis-Hastings
(MH) algorithm (in which every proposal is accepted).  It then follows
immediately from basic MCMC theory that the Markov chain associated
with HS Gibbs is reversible with respect to $f_{X,Y}(x,y)$ \citep[see,
  e.g.,][]{geyer2011handbook}.  First, it's clear by inspection that a
single iteration of HS Gibbs can be recast as follows: Suppose the
current state is $(X_n,Y_n) = (x,y)$.  Flip an ``$r$-coin.''  If the
coin comes up heads, then set $(X_{n+1},Y_{n+1}) = (X',y)$, where $X'$
is drawn from the density
\[
c_1(x'|x;y) = \int_{\Z} f_{X|Y,Z}(x'|y,z) f_{Z|X,Y}(z|x,y) dz \;.
\]
If, on the other hand, the coin comes up tails, then set
$(X_{n+1},Y_{n+1}) = (x,Y')$, where $Y'$ is drawn from the density
\[
c_2(y'|y;x) = \int_{\Z} f_{Y|X,Z}(y'|x,z) f_{Z|X,Y}(z|x,y) dz \;.
\]
Now consider a MH algorithm in which, at each iteration, with
probability $r$ we (keep $y$ fixed and) perform a MH update that
leaves $f_{X|Y}(\cdot|y)$ invariant using candidate density
$c_1(x'|x;y)$, and with probability $1-r$ we (keep $x$ fixed and)
perform a MH update that leaves $f_{Y|X}(\cdot|x)$ invariant using
candidate density $c_2(y'|y;x)$.  The Hastings ratio for the update
that leaves $f_{X|Y}(\cdot|y)$ invariant is given by
\[
r(x,x') = \frac{f_{X|Y}(x'|y) c_1(x|x';y)}{f_{X|Y}(x|y) c_1(x'|x;y)}
\;.
\]
The numerator of $r(x,x')$ can be written as
\begin{equation*}
\frac{1}{f_Y(y)} f_{X,Y}(x',y) \int_{\Z} f_{X|Y,Z}(x|y,z)
f_{Z|X,Y}(z|x',y) dz = \frac{1}{f_Y(y)} \int_{\Z} \frac{f(x,y,z)
  f(x',y,z)}{f_{Y,Z}(y,z)} dz \;,
\end{equation*}
which is clearly a symmetric function of $(x,x')$.  Hence, $r(x,x')
\equiv 1$, so the candidate is never rejected.  A similar argument
shows that, for fixed $x$, the MH update for $f_{Y|X}(\cdot|x)$ using
candidate density $c_2(y'|y;x)$ also never rejects.  Therefore, the HS
Gibbs sampler is, in fact, a RS, variable-at-a-time
Metropolis-Hastings (MH) algorithm, and reversibility follows.

As mentioned above, the HS algorithm can be viewed as a compromise
between SS and RS Gibbs.  Now, if it were known that one of the
standard scans always produces a superior Gibbs sampler, then it might
not make sense to consider such a compromise.  However, as we now
explain, this is far from being the case.  There are two standard
criteria for comparing MCMC algorithms.  The first is based on the
convergence rates of the underlying Markov chains, and the second is
based on the asymptotic variances of ergodic averages.
(Appendix~\ref{app:theory} contains some general theory on this topic
for reversible chains.)  It is known that neither of the standard scan
Gibbs samplers dominates the other in terms of convergence rate.
Indeed, there are examples in the literature of SS Gibbs samplers that
converge faster than their RS counterparts, and others where the
opposite is true \citep[see,
e.g.,][]{roberts2015surprising,robe:sahu:1997,he:desa:mitl:re:2016}.
On the other hand, there is some general theory suggesting that the SS
Gibbs sampler is better when the criterion is asymptotic variance, but
these results are rather limited in scope.  For example,
\citet{greenwood1998information} show that the asymptotic variance
under the RS algorithm is no more than twice that under the SS
algorithm, and \citet{andrieu2016random} proves that, when the Gibbs
sampler has exactly two blocks, the SS algorithm is always better.
(See also \citet{maire2014comparison}, \citet{liu1995covariance} and
\citet{latuszynski2013adaptive}.)  So, based on what is currently
known, there is no clear cut winner between the SS and RS versions of
the Gibbs sampler when there are more than two blocks.

The HS Gibbs sampler has important advantages over the two standard
scan Gibbs samplers.  Firstly, it can be much easier to establish the
geometric ergodicity of a HS Gibbs Markov chain than to do the same
for the corresponding systematic and random scan Gibbs chains.  We
provide examples of this in Sections~\ref{sec:glmm} and~\ref{sec:smn}.
(Of course, the important practical benefits of basing one's MCMC
algorithm on a geometrically ergodic Markov chain have been
well-documented by, e.g., \citet{roberts1998markov},
\citet{jones2001honest}, \citet{flegal2008markov} and
\citet{latuszynski2013nonasymptotic}.)  Secondly, as we explain in
Section~\ref{sec:adding}, the sandwich methodology of
\citet{hobert2008theoretical} can be applied to the HS Gibbs algorithm
(but not to the standard scan Gibbs samplers).  This allows for the
addition of up to two extra steps at each iteration that can
potentially speed up the convergence rate without adding much to the
computational complexity.  Moreover, because HS Gibbs is reversible,
we are able to prove that, under weak regularity conditions, adding
sandwich steps always results in an improved algorithm in terms of
both convergence rate and asymptotic variance.  Another advantage that
HS Gibbs has over SS Gibbs (but not over RS Gibbs) is that, if
specific information about the target distribution is known, the
practitioner may vary the selection probability $r \in (0,1)$ to cause
one set of parameters to be updated more frequently than the other.
Lastly, note that the $Z$ component, which is typically used only to
facilitate sampling and is not itself of inferential interest, is
\textit{not} part of the HS Markov chain.  The same is true of the
basic DA algorithm.  While it is possible to marginalize over the $Z$
component in the SS Gibbs chain and still have a bona fide Markov
chain, such marginalization is not possible with the RS Gibbs
algorithm.

It is straightforward to extend the HS Gibbs sampler to situations in
which there are more than three blocks.  Indeed, suppose that breaking
$U$ into two components is not enough.  That is, suppose that we are
unable to identify a partition $u = (x,y)$ such that sampling from
$f_{X|Y,Z}$ and $f_{Y|X,Z}$ is straightforward, but we are able to
find an $s$-component partition, $u=(x_1,x_2,\ldots,x_s)$, such that
it is possible to sample from each $f_{X_j | X_{-j}, Z}$, for
$j=1,\dots,s$, where, as usual,
$X_{-j} = (X_1,\dots,X_{j-1},X_{j+1},\dots,X_s)$.  It is
straightforward to extend the HS algorithm (and all the results that
we discuss in this paper) to this more general case.  For example, at
each iteration of the (generalized) HS algorithm, we update $Z$ and
one randomly chosen element from the random vector $(X_1,\dots,X_s)$.

The only MCMC methods that have been considered so far in this paper
are the DA algorithm and the Gibbs sampler, which could be considered
``classical'' MCMC techniques.  In particular, we have not mentioned
any ``state of the art'' MCMC techniques, such as particle MCMC
\citep{andr:douc:hole:2010} or Hamiltonian Monte Carlo
\citep{neal:2011}.  There are two reasons for this.  Firstly, these
methods are \textit{much} more complex than the classical ones, and
even describing them accurately requires the introduction of a great
deal of notation.  Secondly, and perhaps more importantly, these more
sophisticated methods are often not required to solve a given problem.
Indeed, there are plenty of Bayesian models with posterior
distributions that, while intractable, are perfectly amenable to
classical MCMC methods such as the Gibbs sampler and the
Metropolis-Hastings algorithm.  (Several such examples are detailed in
this paper.)  In such situations, there is no need to consider more
sophisticated MCMC methods, which can be \textit{much} more difficult
to design, code, and analyze than the classical methods.  As an analogy, consider a situation where we have a posterior
distribution that is complex, but from which we can make iid draws (in a
reasonably efficient manner). In such a case, there would be no need to
resort to MCMC since we could effectively explore the posterior using
classical (iid) Monte Carlo.

The remainder of this paper is organized as follows.
Section~\ref{sec:glmm} contains our first serious example of a HS
Gibbs sampler.  The target is the posterior distribution associated
with a Bayesian shrinkage model with random effects.  This algorithm
was first introduced by \citet{abrahamsen2019}, and we restate their
main result, which provides easily checked sufficient conditions for
geometric ergodicity of the underlying Markov chain.  The section ends
with a description of a small empirical study comparing SS, RS and HS
Gibbs.  The topic of Section~\ref{sec:adding} is the \textit{hybrid
  scan sandwich} (HSS) algorithm, which is the result of adding
sandwich steps to a HS Gibbs sampler.  We illustrate the construction
of HSS algorithms by adding sandwich steps to the HS algorithm for our
Student's $t$ example, and to the algorithm described in
Section~\ref{sec:glmm}.  Section~\ref{sec:smn} deals with the
development and analysis of a HS Gibbs sampler for a Bayesian linear
regression model with scale mixtures of normal errors.  A general
result providing sufficient conditions for geometric ergodicity is
stated and applied to two specific mixing densities.  We close with a
discussion in Section~\ref{sec:discussion}.  The Appendix contains
important theoretical results for the general HSS algorithm, as well
as a proof of the convergence rate result stated in
Section~\ref{sec:smn}.

\section{The General Linear Mixed Model with a Continuous Shrinkage Prior}
\label{sec:glmm}

The general linear mixed model takes the form
\begin{equation}
\label{eq:glmm}
Y = X\beta + Zu + e \;,
\end{equation}
where $Y$ is an $N \times 1$ data vector, $X$ and $Z$ are known
matrices with dimensions $N \times p$ and $N \times q$, respectively,
$\beta$ is an unknown $p \times 1$ vector of regression coefficients,
$u$ is a random vector whose elements represent the various levels of
the random factors in the model, $e \sim
\mbox{N}_N(0,\lambda_0^{-1}I),$ and the random vectors $e$ and $u$ are
independent. Suppose that the model contains $m$ random factors, so
that $u$ and $Z$ may be partitioned as $u = (u_1^T \; u_2^T \cdots
u_m^T)^T$ and $Z = (Z_1 \; Z_2 \cdots Z_m),$ where $u_i$ is $q_i
\times 1$, $Z_i$ is $N \times q_i$, and $q_1+ \cdots + q_m = q.$ Then
$Zu = \sum_{i=1}^m Z_i u_i.$ It is assumed that $u \sim
\mbox{N}_q(0,D),$ where $D = \bigoplus_{i=1}^m \lambda_i^{-1}
I_{q_i}.$ Finally, let $\lambda$ denote the vector of precision
components, i.e., $\lambda = (\lambda_0 \; \lambda_1 \cdots
\lambda_m)^T.$

A Bayesian version of the general linear mixed model requires
specification of a prior distribution for the unknown parameters
$\beta$ and $\lambda$.  A popular choice is the proper (conditionally)
conjugate prior that takes $\beta$ to be multivariate normal, and
takes each of the precision components to be gamma. However, in the
increasingly important situation where $p$ is larger than $N,$ we may
wish to use a so-called Bayesian shrinkage prior on $\beta$ (see,
e.g., \citealt{griffin2010inference}). Indeed, \citet{abrahamsen2019}
considered the following Bayesian shrinkage version of the general
linear mixed model which incorporates the normal-gamma prior due to
\citet{griffin2010inference}:
\begin{align*}
& Y|\beta,u,\tau,\lambda \sim \mbox{N}_N(X\beta+Zu,\lambda_0^{-1}I_N)\;\\
& u|\beta,\tau,\lambda \sim \mbox{N}_q(0,D) \; \\
& \beta|\tau,\lambda \sim \mbox{N}_p(0,\lambda_0^{-1}D_{\tau}) \;
\end{align*}
where $D_{\tau}$ is a diagonal matrix with $\tau = (\tau_1 \; \tau_2
\cdots \tau_p)^T$ on the diagonal. Finally, all components of $\tau$
and $\lambda$ are assumed \textit{a priori} independent with
$\lambda_i \sim \text{Gamma}(a_i,b_i),$ for $i=0,1,\ldots,m,$ and
$\tau_j \sim \text{Gamma}(c,d),$ for $j=1,\ldots,p.$ There is evidence
(both empirical and theoretical) suggesting that values of $c$ in
$(0,1/2]$ lead to a posterior that concentrates on sparse $\beta$
  vectors \citep{bhattacharya2012bayesian,bhattacharya2015dirichlet}.

Define $\theta = (\beta^T \; u^T)^T$ and $W = [X \; Z]$, so that
$W\theta = X\beta + Zu$.  The vector $\tau$ is treated as latent data,
and the distribution of interest is the posterior distribution of
$(\theta,\lambda)$ given the data, $Y=y$.  In terms of the notation
used in the Introduction, the role of $Z$ is being played here by
$\tau$, and the role of $(x,y)$ is being played by $(\theta,\lambda)$.
(Ideally, we would keep the notation consistent with that used in the
Introduction, but given how entrenched the roles of $X$, $y$ and $Z$
are in mixed linear regression models, adherence to the notation from
the Introduction would make this section rather difficult to read.)
Here is the full posterior density:
\begin{align}
\begin{split}
\label{eq:fullposterior}
\pi(\theta,\tau,\lambda|y) &\propto \lambda_0^{N/2} \exp \left\{
-\frac{\lambda_0}{2} (y-W\theta)^T (y-W\theta) \right\} \\ &\times
\lambda_0^{p/2} \left[ \prod_{j=1}^p \tau_j^{-1/2} \right] \exp
\left\{ -\frac{\lambda_0}{2} \beta^T D_{\tau}^{-1} \beta \right\}
\left[ \prod_{i=1}^m \lambda_i^{q_i/2} \right] \exp \left\{
-\frac{1}{2} u^T D^{-1} u \right\}\\ &\times \left[ \prod_{j=1}^p
  \tau_j^{c-1} e^{-d\tau_j} I_{\reals_+}(\tau_j) \right] \left[
  \prod_{i=0}^m \lambda_i^{a_i-1} e^{-b_i \lambda_i}
  I_{\reals_+}(\lambda_i) \right] \;.\\
\end{split}
\end{align}
In order to use the basic DA algorithm, we would need to be able to
sample from $\pi(\tau | \theta, \lambda, y)$ and from $\pi(\theta,
\lambda | \tau, y)$.  The former is not a problem, as we now explain.
We write $V \sim \text{GIG}(\zeta,\xi,\psi)$ to mean that $V$ has a
generalized inverse Gaussian distribution with density
\begin{equation}
\label{eq:gig}
\frac{\xi^{\zeta/2}}{2 \psi^{\zeta/2} \; \text{K}_{\zeta}(\sqrt{\xi \psi})} v^{\zeta - 1} e^{-\frac{1}{2}(\xi v + \frac{\psi}{v})}I_{\reals_+}(v),
\end{equation}
where $\xi > 0, \psi > 0,$ and $\text{K}_{\zeta}(\cdot)$ denotes the
modified Bessel function of the second kind. Conditional on $(\theta,
\lambda, y),$ the components of $\tau$ are independent with $$\tau_j
\sim \text{GIG}(c-1/2, 2d, \lambda_0 \beta_j^2).$$ Unfortunately, it
is not straightforward to make draws from $\pi(\theta, \lambda | \tau,
y)$.  Thus, the DA algorithm is not applicable. On the other hand, the
conditional density of $\theta$ given $(\lambda, \tau, y)$ is
multivariate normal, and, given $(\theta, \tau, y),$ the components of
$\lambda$ are independent gammas. Hence, the HS Gibbs algorithm is
applicable.

We now state the conditional densities, beginning with $\lambda.$
First, $$\lambda_0 | \theta, \tau, y \sim \text{Gamma} \left(
\frac{N+p+2a_0}{2}, \frac{\norm{y-W\theta}^2}{2} + \frac{\beta^T
  D_{\tau}^{-1} \beta}{2} + b_0 \right).$$ Now, for $i =
1,2,\ldots,m,$ we have $$\lambda_i | \theta, \tau, y \sim \text{Gamma}
\left( \frac{q_i + 2a_i}{2}, \frac{\norm{u_i}^2}{2} + b_i \right).$$
Now, define $T_{\lambda,\tau} = \lambda_0 (X^T X + D_{\tau}^{-1}),
M_{\lambda,\tau} = I - \lambda_0 X^T T_{\lambda,\tau}^{-1} X^T,$ and
$Q_{\lambda,\tau} = \lambda_0 Z^T M_{\lambda,\tau} Z + D^{-1}.$
Conditional on $(\lambda,\tau,y), \theta$ is $(p+q)$-variate normal
with mean
\begin{equation*}
\Theta := \text{E}[\theta | \tau, \lambda, y]
=
\begin{bmatrix}
\lambda_0 T_{\lambda, \tau}^{-1} X^T y - \lambda_0^2 T_{\lambda,
  \tau}^{-1} X^T Z Q_{\lambda, \tau}^{-1} Z^T M_{\lambda,\tau} y
\\ \lambda_0 Q_{\lambda, \tau}^{-1} Z^T M_{\lambda,\tau} y
\end{bmatrix},
\end{equation*}
and covariance matrix
\begin{equation*}
\Sigma := \text{Var}[\theta | \tau, \lambda, y]
=
\begin{bmatrix}
T_{\lambda, \tau}^{-1} + \lambda_0^2 T_{\lambda, \tau}^{-1} X^T Z
Q_{\lambda, \tau}^{-1} Z^T X T_{\lambda, \tau}^{-1} & -\lambda_0
T_{\lambda, \tau}^{-1} X^T Z Q_{\lambda, \tau}^{-1} \\ -\lambda_0
Q_{\lambda, \tau}^{-1} Z^T X T_{\lambda, \tau}^{-1} & Q_{\lambda,
  \tau}^{-1}
\end{bmatrix}.
\end{equation*}

The HS Gibbs sampler is based on the Markov chain $\Phi = \{(\theta_n,
\lambda_n)\}_{n=0}^{\infty}$ with state space $\mathsf{X} =
\reals^{p+q} \times \reals_+^{m+1}$ and fixed selection probability $r
\in (0,1)$.  If the current state is $(\theta_n,
\lambda_n)=(\theta,\lambda),$ then we simulate the new state,
$(\theta_{n+1}, \lambda_{n+1}),$ using the following two-step
procedure.

\baro \vspace*{2mm}
\noindent {\rm Iteration $n+1$ of the HS Gibbs sampler:}
\begin{enumerate}
\item Draw $\{\tau_j\}_{j=1}^p$ independently with $\tau_j \sim
  \text{GIG}(c-1/2, 2d, \lambda_0 \beta_j^2)$, let $\tau = (\tau_1 \;
  \tau_2 \cdots \tau_p)^T,$ and, independently, draw $W \sim
  \text{Uniform}(0,1).$

\item 
\begin{enumerate}
\item If $W \le r,$ draw $(\lambda^*_0,\ldots,\lambda^*_m)$
  independently with $$\lambda^*_0 \sim \text{Gamma} \left(
  \frac{N+p+2a_0}{2}, \frac{\norm{y-W\theta}^2}{2} + \frac{\beta^T
    D_{\tau}^{-1} \beta}{2} + b_0 \right),$$ and for
  $i=1,\dots,m$, $$\lambda^*_i \sim \text{Gamma} \left( \frac{q_i +
    2a_i}{2}, \frac{\norm{u_i}^2}{2} + b_i \right),$$ and let
  $\lambda^* = (\lambda^*_0 \; \lambda^*_1 \cdots \lambda^*_m)^T.$ Set
  $(\theta_{n+1}, \lambda_{n+1}) = (\theta, \lambda^*)$.

\item Otherwise if $r < W \le 1,$ draw $$\theta^* \sim \mbox{N}_{p+q}
  \left( \Theta, \Sigma \right),$$ and set $(\theta_{n+1},
  \lambda_{n+1}) = (\theta^*, \lambda).$
\end{enumerate}
\end{enumerate}
\vspace*{-3mm}
\barba
\bigskip

We know from the discussion in the Introduction that the Markov chain
driving this algorithm is reversible with respect to
$\pi(\theta,\lambda|y)$.  Furthermore, it is straightforward to show
that this chain is Harris ergodic (i.e., irreducible, aperiodic and
Harris recurrent). \citet{abrahamsen2019} analyzed this HS Gibbs
sampler, and proved that it is geometrically ergodic under mild
regularity conditions.  Here is their main result.

\begin{theorem}
\label{thm:glmm}
The HS Gibbs Markov chain, $\{(\theta_n,\lambda_n)\}_{n=0}^{\infty}$
is geometrically ergodic for all $r \in (0,1)$ if
\begin{enumerate}
\item $Z := (Z_1 \; Z_2 \; \cdots \; Z_m)$ has full column rank.
\item $a_0 > \frac{1}{2} \big( \hbox{rank}(X) - N + (2c+1)p+2 \big)$,
  and
\item $a_i > 1$ for each $i \in \{1,2,\ldots,m\}$.
\end{enumerate}
\end{theorem}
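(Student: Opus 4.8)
The plan is to establish geometric ergodicity along the classical route: a geometric (Foster--Lyapunov) drift condition for a suitable function $V(\theta,\lambda)$, together with an associated minorization on the sublevel sets of $V$, from which geometric ergodicity follows by the standard drift-and-minorization theory for Harris chains (in the spirit of the results of Meyn and Tweedie and of Rosenthal). Since we already know that $\Phi$ is reversible and Harris ergodic, and since its transition density is built from multivariate normal, gamma, and generalized inverse Gaussian draws and is therefore strictly positive and continuous, every bounded sublevel set $\{V \le d\}$ is a small set and the minorization step is routine. Hence essentially the entire burden falls on verifying $(PV)(\theta,\lambda) \le \rho\, V(\theta,\lambda) + L$ for some $\rho < 1$ and $L < \infty$, where $P$ is the HS Gibbs kernel.

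The observation I would exploit is that one step of HS Gibbs is a mixture, so for any $V$,
\[
(PV)(\theta,\lambda) = r\,(P_\lambda V)(\theta,\lambda) + (1-r)\,(P_\theta V)(\theta,\lambda),
\]
where $P_\lambda$ updates only $\lambda$ (holding $\theta$ fixed, after drawing $\tau$) and $P_\theta$ updates only $\theta$ (holding $\lambda$ fixed, after drawing $\tau$). Neither sub-kernel contracts on its own, since $P_\lambda$ leaves any function of $\theta$ unchanged and $P_\theta$ leaves any function of $\lambda$ unchanged; the contraction must instead come from the way the mixture transfers ``mass'' between a $\theta$-part and a $\lambda$-part of $V$. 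Accordingly I would take
\[
V(\theta,\lambda) = \norm{y - W\theta}^2 + \sum_{i=1}^m \norm{u_i}^2 + \sum_{i=0}^m \Big( \kappa_i \lambda_i + \frac{\gamma_i}{\lambda_i} \Big),
\]
with weights $\kappa_i,\gamma_i > 0$ to be tuned. This form reflects the two-way couplings $\norm{u_i}^2 \leftrightarrow \lambda_i^{-1}$ and $(\norm{y-W\theta}^2,\ \beta^T D_\tau^{-1}\beta) \leftrightarrow \lambda_0^{-1}$, and the $\kappa_i\lambda_i$ terms additionally force the sublevel sets to be bounded (so that the small-set step above is immediate).

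Next I would bound the two sub-kernels using the explicit conditionals. For $P_\lambda$, the gamma moment formulas give, for $i\ge 1$, $E[\lambda_i^{-1}\mid\theta,\tau] = (\norm{u_i}^2 + 2b_i)/(q_i + 2a_i - 2)$ and $E[\lambda_i\mid\theta,\tau] \le (q_i+2a_i)/(2b_i)$, with analogous expressions for $\lambda_0$ in which $\norm{u_i}^2$ is replaced by $\norm{y-W\theta}^2 + \beta^T D_\tau^{-1}\beta$; the reciprocal moments are finite exactly when the gamma shape exceeds one, which is where $a_i > 1$ and the lower bound on $a_0$ first enter. For $P_\theta$, I would use $E[\norm{\theta^*}^2\mid\lambda,\tau] = \norm{\Theta}^2 + \operatorname{tr}(\Sigma)$ together with $Q_{\lambda,\tau}^{-1} \preceq D$ and $T_{\lambda,\tau}^{-1} \preceq \lambda_0^{-1} D_\tau$, which yield $\operatorname{tr}(\Sigma_u) \le \sum_i q_i/\lambda_i$ and bound the $\beta$-block by $\lambda_0^{-1}\sum_j \tau_j$ plus corrections; here the full-column-rank assumption on $Z$ is precisely what keeps $Q_{\lambda,\tau}^{-1}$, and hence the mean vector $\Theta$, bounded as the $\lambda_i \downarrow 0$, so the additive constants remain finite. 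The intervening draw of $\tau$ must then be integrated out, which introduces the GIG/Bessel moments of $\tau_j \sim \text{GIG}(c-1/2, 2d, \lambda_0\beta_j^2)$; these are the source of the factors $c$ and $p$ in the bound on $a_0$.

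Assembling these estimates, $P_\lambda V$ converts each $\gamma_i\lambda_i^{-1}$ term into a multiple of $\norm{u_i}^2$ (gain $\gamma_i/(q_i+2a_i-2)$) while leaving the $\theta$-terms fixed, and $P_\theta V$ converts each $\norm{u_i}^2$ term into a multiple of $\lambda_i^{-1}$ (gain $\le q_i$) while leaving the $\lambda$-terms fixed, with the $\kappa_i\lambda_i$ terms contracting at rate $1-r$. Matching coefficients in $(PV) \le \rho V + L$ then reduces, for each $\norm{u_i}^2 \leftrightarrow \lambda_i^{-1}$ loop, to requiring the product of the two edge gains to be strictly less than one, i.e. $q_i/(q_i+2a_i-2) < 1$, which is exactly $a_i > 1$; one then chooses the ratio $\gamma_i/\kappa_i$ in the resulting nonempty interval to obtain a $\rho = \rho(r) < 1$ for every fixed $r\in(0,1)$. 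I expect the genuinely hard part to be the $\beta$/$\tau$/$\lambda_0$ sub-problem, where the analogous loop passes through the random matrices $T_{\lambda,\tau}^{-1}$ and $Q_{\lambda,\tau}^{-1}$ and through the GIG moments of $\tau$: bounding $P_\theta(\norm{y-W\theta}^2)$ and $P_\lambda(\lambda_0^{-1})$ uniformly and showing that the corresponding loop gain is controlled by the dimension count $\operatorname{rank}(X) - N + (2c+1)p + 2$ is where the precise condition on $a_0$ is forced and where most of the technical effort lies.
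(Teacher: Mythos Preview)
First, note that the paper does not actually prove this theorem: it is quoted as the main result of \citet{abrahamsen2019}, and the reader is referred there for the argument. So there is no in-paper proof against which to compare your proposal. Your drift-and-minorization plan is certainly the standard route for such results, and the treatment you sketch of the $\norm{u_i}^2\leftrightarrow \lambda_i^{-1}$ loops (with $a_i>1$ giving loop gain $q_i/(q_i+2a_i-2)<1$) is reasonable.

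There is, however, a concrete gap in your proposed Lyapunov function. Your $V$ depends on $\beta$ only through $\norm{y-W\theta}^2 = \norm{y - X\beta - Zu}^2$, so it is constant along the null space of $X$. In the high-dimensional regime $p>N$---precisely the setting that motivates the shrinkage prior and that the theorem is meant to cover---$X$ cannot have full column rank, the sublevel sets $\{V\le d\}$ are therefore unbounded cylinders in the $\ker X$ direction, and your assertion that ``the $\kappa_i\lambda_i$ terms additionally force the sublevel sets to be bounded'' is false. The small-set step then fails, and in any case no drift inequality for this $V$ can control excursions of $\beta$ along $\ker X$. The repair is to include a term that sees $\beta$ directly, but the only coupling back to $\lambda$ runs through the latent $\tau$, and the GIG moments satisfy $E[\tau_j\mid\theta,\lambda]\asymp|\beta_j|\sqrt{\lambda_0}$ for large argument rather than $\beta_j^2$, so a quadratic term in $\beta$ will not close the loop; one is forced to use a function that grows roughly linearly in $\beta$ and to control the Bessel-ratio expressions carefully. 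That calculation is where the $(2c+1)p$ in the condition on $a_0$ actually comes from, and it is more delicate than your sketch allows.
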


The conditions of Theorem~\ref{thm:glmm} are quite easy to check, and
the result is applicable when $p > N$.  Moreover, there are no known
convergence rate results for the corresponding SS and RS Gibbs
samplers.  Indeed, \citet{abrahamsen2019} contend that HS Gibbs is
much easier to analyze than the other two, despite being no more
difficult to implement.

We now compare the HS, SS, and RS Gibbs samplers in the context of a
numerical example.  (No numerical results were presented in
\citet{abrahamsen2019}.)  We also include in the comparison the hybrid
scan sandwich algorithm, which is developed in
Subsection~\ref{ssec:glmm}.  We consider three simulation settings
corresponding to the situations where $N > p$, $N = p$, and $N < p$,
respectively, in order to account for the effects of the shrinkage
prior.  The elements of the design matrix $X$ were chosen by
generating iid $N(0,1)$ random variables.  There is one random effect
with 5 levels, i.e., $m=1$ and $q_1=q=5$, and we use the standard cell
means model structure for the matrix $Z$.  Recall from
Theorem~\ref{thm:glmm} that there are several restrictions on the
hyperparameters that must be adhered to in order for the HS Gibbs
Markov chain to be geometrically ergodic. This sometimes requires
$a_0$ to be large. We mitigate this by setting $b_0 = a_0$ in each
simulation setting to give the corresponding prior distribution a mean
of 1.  We set $a_1 = 1.5$ and $b_1 = 1$ for all three simulations.
Also, recall that there is empirical and theoretical evidence
suggesting that values of $c$ in $(0,1/2]$ lead to a posterior that
concentrates on sparse $\beta$ vectors.  Accordingly, we set $c=1/4$
and $d=1$ throughout.  Here is a summary of the simulation settings
considered.

\begin{center}
Table 1:
\textit{Hyperparameter settings}
\end{center}
\begin{center}
\begin{tabular}{ccccccccccc}
\hline
\textbf{Setting} & $\boldsymbol{N}$ & $\boldsymbol{p}$ & $\boldsymbol{m}$ & $\boldsymbol{q}$ & $\boldsymbol{a_0}$ & $\boldsymbol{b_0}$ & $\boldsymbol{a_1}$ & $\boldsymbol{b_1}$ & $\boldsymbol{c}$  & $\boldsymbol{d}$ \\ \hline
1       & 100 & 10  & 1   & 5   & 1     & 1     & 1.5   & 1     & 0.25 & 1   \\
2       & 100 & 100 & 1   & 5   & 77    & 77    & 1.5   & 1     & 0.25 & 1   \\
3       & 100 & 200 & 1   & 5   & 152   & 152   & 1.5   & 1     & 0.25 & 1   \\ \hline
\end{tabular}
\end{center}
\medskip

\noindent In each case, the data were simulated according to the model
using a ``bottom up'' strategy, i.e., the hyperparameters were
randomly drawn from their priors, and so on, up the hierarchy.

We fix the selection probability at $r = 1/2$ for the HS and HSS
algorithms.  For RS Gibbs, we fix the selection probabilities at
$r_1=r_2=r_3=1/3$.  We wish to compare the algorithms using
autocorrelation plots, but the four algorithms make different numbers
of updates per iteration.  Indeed, the SS, HS, HSS and RS algorithms
make 3, 2, 2 and 1 updates/iteration, respectively.  So an adjustment
must be made in order to perform an ``apples to apples'' comparison.
If $k$ is a positive integer, then it seems fair to compare the lag
$2k$ autocorrelation for SS algorithm with the lag $3k$
autocorrelation for the HS and HSS algorithms, and the lag $6k$
autocorrelation for the RS algorithm.

In each of the three separate simulations, we ran the SS, HS, HSS, and
RS algorithms for 40,000 iterations, 60,000 iterations, 60,000
iterations, and 120,000 iterations, respectively.  We then discarded
the first half of each run as burn-in, and computed the
autocorrelations based on the remaining data as described above.  We
used the function $(y - W \theta)^T(y - W \theta) + \lambda_0 +
\lambda_1$ because it involves both parameters of interest ($\theta$
and $\lambda$).  The results are summarized in
Figure~\ref{fig:simplot}.  (Just to be clear, as an example, what is
plotted above the abscissa at the value 6 for the SS, HS, HSS, and RS
algorithms is the estimated autocorrelation for lag 6, 9, 9, and 18,
respectively.)  We can clearly see that for all three simulations, the
magnitude of the autocorrelations for SS Gibbs is the lowest, while
the other three are all a bit higher and quite close to each other.
The performances of the HS Gibbs sampler and the HSS algorithm are
similar.

\begin{figure}[H]
  \includegraphics[width=\linewidth]{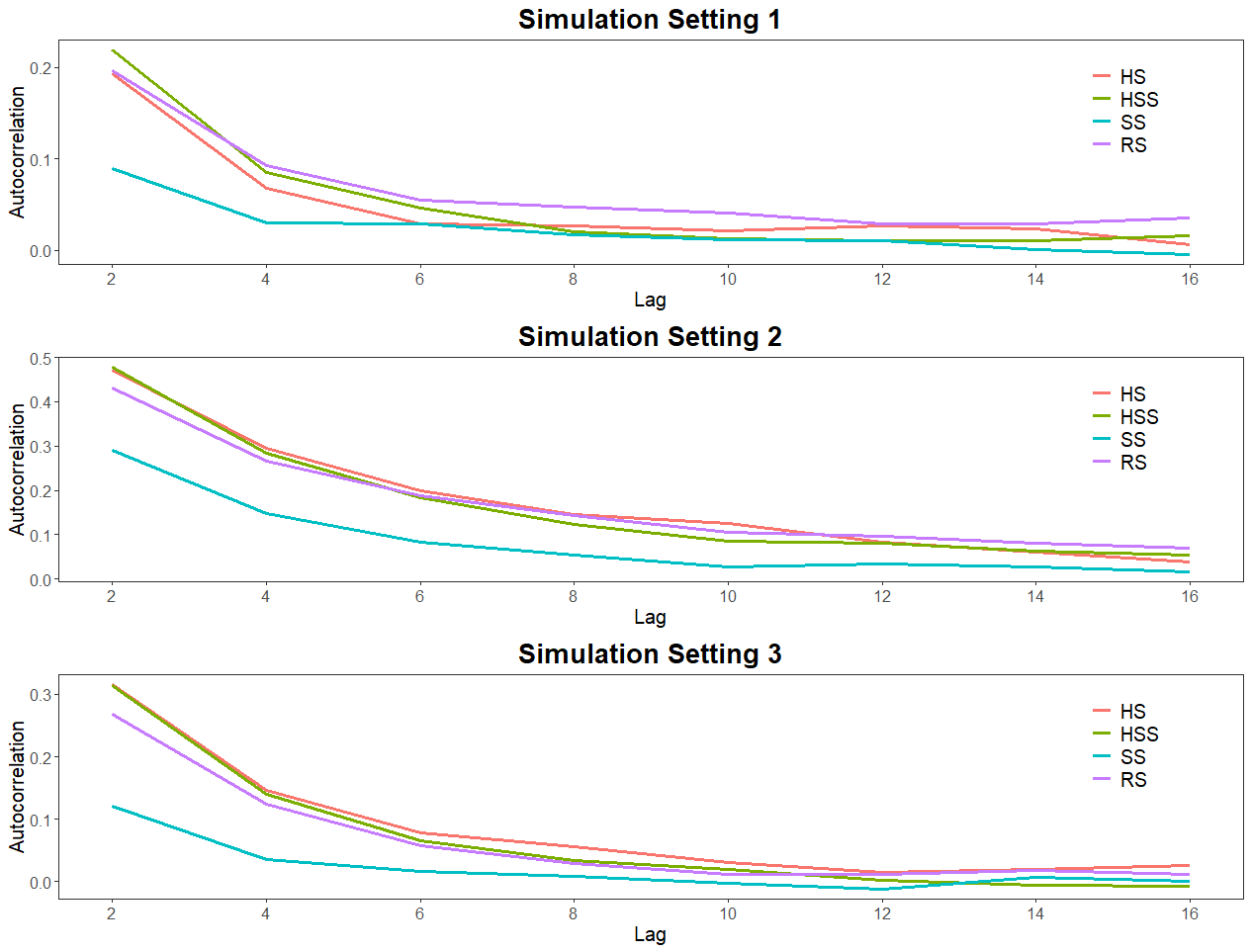}
  \caption{Autocorrelations for SS, HS, HSS, and RS algorithms}
  \label{fig:simplot}
\end{figure}

While it is true that the SS Gibbs algorithm seems to be marginally
better than the others in this particular case, recall that it remains
unknown whether the corresponding Markov chain is geometrically
ergodic.  On the other hand, the HS chain (and the HSS chain - see
Subsection~\ref{ssec:HSS}) are both known to be geometrically ergodic.
Thus, in order to ensure reliability, we recommend the two ``known
quantities.''

\section{The Hybrid Scan Sandwich Algorithm}
\label{sec:adding}

In this section, we explain how to add sandwich steps to the HS Gibbs
sampler to form the \textit{hybrid scan sandwich} (HSS) algorithm.
There are four subsections.  The basic sandwich algorithm of
\citet{hobert2008theoretical} is described in
Subsection~\ref{ssec:basics}.  A generic description of the HSS
algorithm is provided in Subsection~\ref{ssec:HSS}.  In
Subsection~\ref{ssec:toy}, we illustrate the techniques using the
Student's $t$ example from the Introduction.  Finally, in
Subsection~\ref{ssec:glmm}, we develop a HSS algorithm for the
intractable posterior associated with the mixed model discussed in
Section~\ref{sec:glmm}.

\subsection{The sandwich algorithm}
\label{ssec:basics}

In keeping with the notation of the Introduction, recall that the
transition associated with a single iteration of the DA algorithm may
be represented as
\[
\begin{tikzcd}
U \arrow[r] & Z \arrow[r] & U' \;.
\end{tikzcd}
\]
Building on ideas in \citet{liu1999parameter}, \citet{meng1999seeking}
and \citet{van2001art}, \citet{hobert2008theoretical} introduced an
alternative to the DA algorithm that employs an extra move on the
$\mathsf{Z}$ space that is ``sandwiched'' between the two conditional
draws.  If the extra move is chosen carefully, it can break the
correlation between consecutive iterates of the DA algorithm, thereby
speeding up the algorithm.  Again, using notation from the
Introduction, let $f_Z$ denote the $z$-marginal of $f(u,z)$, and
suppose that $R(z,dz')$ is \textit{any} Markov transition function
(Mtf) that is reversible with respect to $f_Z$, i.e., $R(z,dz') f_Z(z)
dz = R(z',dz) f_Z(z') dz'$.  The \textit{sandwich algorithm} simulates
the Markov chain whose Mtd is
\[
k_{\text{S}}(u' | u) = \int_{\mathsf{Z}} \int_{\mathsf{Z}}
f_{U|Z}(u'|z') R(z,dz') f_{Z|U}(z|u) \, dz \;.
\]
It's easy to see that $k_{\text{S}}(u' | u) f_U(u)$ is symmetric in
$(u,u')$, so the sandwich Markov chain is reversible with respect to
$f_U$.  Also, the sandwich algorithm reduces to DA if we take $R$ to
be the trivial Mtf whose chain never moves from the starting point.
To run the sandwich algorithm, we simply run the DA algorithm as
usual, except that after each $z$ is drawn, we perform the extra step
$z' \sim R(z,\cdot)$ before drawing the new $u$.  Hence, the sequence
of steps in a single iteration of the sandwich algorithm looks like
this:
\[
\begin{tikzcd}
U \arrow[r] & Z \arrow[r] & Z' \arrow[r] & U' \;.
\end{tikzcd}
\]
We now explain how a sandwich step can effectively break the
correlation between $U$ and $U'$ in the context of a toy example.

Suppose the target density is
\[
f_U(u) = \int_{\mathbb{R}} \frac{1}{\sqrt{8\pi}} e^{-\frac{1}{2}
  (u-z)^2 -|z|} \; dz \;.
\]
In order to construct a DA algorithm, we require a joint density whose
$u$-marginal is the target.  Here's an obvious candidate:
\[
f_{U,Z}(u,z) = \frac{1}{\sqrt{8\pi}} e^{-\frac{1}{2} (u-z)^2 -|z|} \;.
\]
Note that $f_Z(z) = \frac{1}{2} e^{-|z|}$, so the marginal
distribution of $Z$ is standard Laplace (or double exponential).  In
order to run the DA algorithm, we need the full conditionals.
Clearly, $U|Z \sim \mbox{N}(Z,1)$, but the distribution of $Z$ given
$U$ is non-standard:
\[
f_{Z|U}(z|u) \propto e^{-\frac{1}{2} (u-z)^2 -|z|} \;.
\]
It's a simple matter to simulate from this density using a rejection
sampler with a Laplace candidate.  We now construct a sandwich
algorithm.  Define a Mtf on $\mathbb{R}$ as follows:
\[
R(z,dz') = r(z'|z) dz' = e^{-|z'|} \big[ I_{\mathbb{R}_+}(z')
  I_{\mathbb{R}_+} (z) + I_{\mathbb{R}_-}(z') I_{\mathbb{R}_-} (z)
  \big] dz' \;,
\]
where $\mathbb{R}_- := (-\infty,0]$.  It's clear that $R(z,dz')$ is
  reversible with respect to $f_Z(z)$.  Note that the Markov chain
  defined by $R$ is \textit{not} irreducible.  In fact, the chain
  remains forever on whichever side of zero it is started.  We now
  provide some intuition about how the extra step breaks the
  correlation between $U$ and $U'$.  Imagine for a moment that
  $r(z'|z)$ were just $f_Z(z')$.  Then $U'$ would be a perfect draw
  from $f_U$ (independent of $U$), and the Markov chain would simply
  be an iid sequence from the target distribution.  Of course,
  $r(z'|z)$ is not $f_Z(z')$, but it actually isn't that far from it.
  First, $r(z'|z)$ depends on $z$ \textit{only through its sign}.
  Now, when $z>0$, $r(z'|z)$ is nothing but $f_Z(z')$ truncated to the
  positive half-line, and when $z \le 0$, $r(z'|z)$ is just $f_Z(z')$
  truncated to the negative half-line.  So we can interpret the extra
  step as follows: Once $Z$ is drawn from $f_{Z|U}(\cdot|u)$, the
  extra step then draws $Z'$ from a truncated version of $f_Z$.
  Intuitively, it seems clear that the correlation between $U$ and
  $U'$ should be quite a bit weaker under the sandwich dynamics, than
  under the DA dynamics.  In order to test this empirically, we ran
  stationary versions of each chain for one million iterations, and
  constructed the autocorrelation plot in Figure~\ref{fig:toyacf}
  using the function $V(u)=u^2$.  Clearly, the autocorrelation of the
  sandwich Markov chain decays to zero much more rapidly than that of
  the corresponding DA chain.
  
\begin{figure}[H]
  \includegraphics[width=\linewidth]{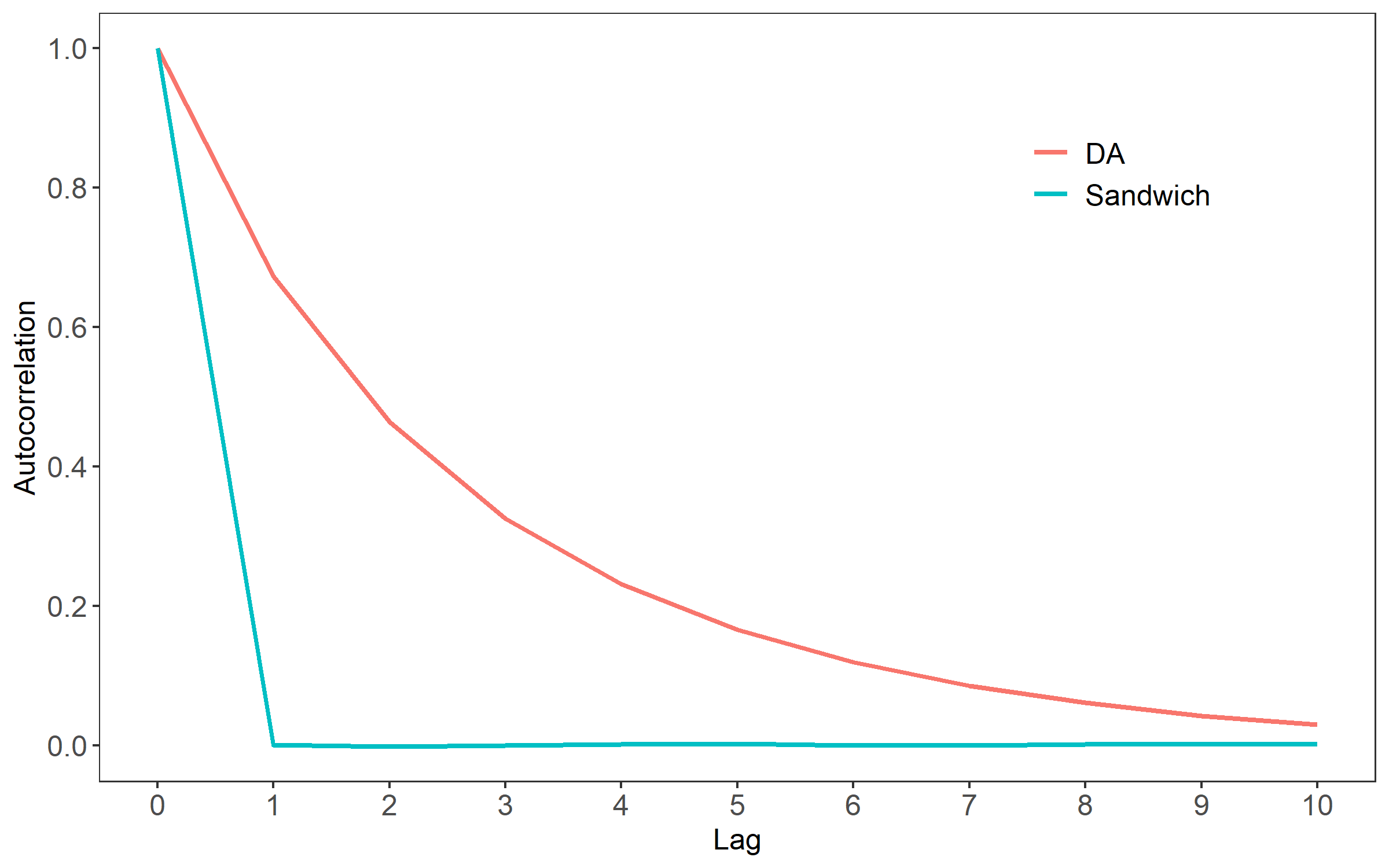}
  \caption{Autocorrelations for the DA and sandwich algorithms}
  \label{fig:toyacf}
\end{figure}

Of course, a sandwich algorithm is a useful alternative to the
underlying DA algorithm only if the computational burden of drawing
from $R$ is small relative to the improvement it provides.  Consider,
for example, the Mtf $R(z,dz') = r(z'|z)dz'$ where $r(z'|z) =
\int_{\mathsf{U}} f_{Z|U}(z'|u) f_{U|Z}(u|z)du$.  This $R$ leads to a
sandwich algorithm that is nothing but two consecutive iterations of
the DA algorithm.  Thus, whatever is gained by adding the extra step
is offset exactly in increased computational effort.  Fortunately, it
is often possible to find an $R$ that leads to a significant
improvement, while adding very little to the overall computational
cost. This is typically accomplished by choosing $R(z,dz')$ such that,
for fixed $z$, the (reducible) chain driven by $R(z,\cdot)$ lives in a
low dimensional subspace of $\mathsf{Z}.$ (Note that such an $R$ would
typically not have a Mtd with respect to Lebesgue measure on
$\mathsf{Z}$, and this is the reason why it is defined via its Mtf,
instead of a Mtd.)

There are a couple of simple techniques for constructing sandwich
moves \citep[see, e.g.,][]{hobert2008theoretical,liu1999parameter},
and the resulting Mtfs can often be simulated with relatively little
computational effort.  In such cases, there is nothing to lose by
adding the step.  In other cases, where simulation of the extra step
requires substantial computational effort, one must decide if the
trade-off is worthwhile.  There are many examples of sandwich
algorithms that drastically outperform their DA counterparts in
empirical studies, see, e.g., \citet{liu1999parameter} and
\citet{meng1999seeking}.  Moreover, the superiority of the sandwich
algorithm has also been established theoretically.  Indeed, results in
\citet{hobert2008theoretical} and \citet{khare2011spectral} show that,
under mild regularity conditions, the sandwich algorithm converges at
least as fast as the DA algorithm, and is at least as good in the
sense of asymptotic variance.

\subsection{The HSS algorithm}
\label{ssec:HSS}

We now explain how to add up to two different sandwich steps to the HS
Gibbs sampler.  Recall that the transition mechanism for each iteration of HS Gibbs with selection probability $r$ is given by 
\[
\begin{tikzcd}
& & (X^*,Y) \\
(X,Y) \arrow[r] & Z \arrow[ur,"r"] \arrow[dr,"1-r"] & \;\\
& & (X,Y^*)
\end{tikzcd}
\]
For fixed $y \in \mathsf{Y},$ let $R_1(z,dz';y)$
denote a Mtf on $\mathsf{Z}$ that is reversible with respect to
$f_{Z|Y}(z|y),$ so that
\begin{equation}
\label{eq:reverse}
R_1(z,dz';y) f_{Z|Y}(z|y) dz = R_1(z',dz;y) f_{Z|Y}(z'|y) dz' \;.
\end{equation}
Define 
\[
k_1(x'|x;y) = \int_{\mathsf{Z}} \int_{\mathsf{Z}} f_{X|Y,Z}(x'|y,z')
R_1(z,dz';y) f_{Z|X,Y}(z|x,y) \, dz \;.  
\]
A routine calculation shows that $k_1(x'|x;y)f_{X|Y}(x|y)$ is
symmetric in $(x,x')$, so $k_1(x'|x;y)$ is reversible with respect to
$f_{X|Y}(x|y)$.  Analogously, for fixed $x \in \mathsf{X}$,
define
\[
k_2(y'|y;x) = \int_{\mathsf{Z}} \int_{\mathsf{Z}} f_{Y|X,Z}(y'|x,z')
R_2(z,dz';x) f_{Z|X,Y}(z|x,y) \, dz \;,
\]
where $R_2(z,dz';x)$ is reversible with respect to $f_{Z|X}(z|x)$.

The HSS algorithm is simply a RS algorithm which, at each iteration,
employs either $k_1(x'|x;y)$ or $k_2(y'|y;x)$.  In particular, fix $r
\in (0,1)$, and consider a Markov chain $\{(\tilde{X}_n,\tilde{Y}_n)
\}_{n=0}^{\infty}$ with state space $\mathsf{X} \times \mathsf{Y}$
that evolves as follows.  If the current state is
$(\tilde{X}_n,\tilde{Y}_n) = (x,y)$, then we simulate the new state,
$(\tilde{X}_{n+1},\tilde{Y}_{n+1})$, using the following two-step
procedure.

\baro \vspace*{2mm}
\noindent {\rm Iteration $n+1$ of the HSS algorithm:}
\begin{enumerate}
\item Draw $Z \sim f_{Z|X,Y}(\cdot|x,y)$, call the result $z$, and,
  independently, draw $W \sim \text{Uniform}(0,1)$.

\item 
\begin{enumerate}
\item If $W \le r$, draw $Z' \sim R_1(z,\cdot;y)$, call the result
  $z'$, draw $X^* \sim f_{X|Y,Z}(\cdot|y,z')$, and set
  $(\tilde{X}_{n+1},\tilde{Y}_{n+1}) = (X^*,y)$.

\item Otherwise if $r < W \le 1$, draw $Z' \sim R_2(z,\cdot;x)$, call
  the result $z'$, draw $Y^* \sim f_{Y|X,Z}(\cdot|x,z')$, and set
  $(\tilde{X}_{n+1},\tilde{Y}_{n+1}) = (x,Y^*)$.
\end{enumerate}
\end{enumerate}
\vspace*{-3mm}
\barba
\bigskip

\newpage
Thus, the HSS algorithm makes the following transition at each
iteration.
\[
\begin{tikzcd}
						                  & & Z' \arrow[r] & (X^*,Y) \\
(X,Y) \arrow[r] & Z \arrow[ur,"r"] \arrow[dr,"1-r"] & \\
						                  & & Z' \arrow[r] & (X,Y^*)
\end{tikzcd}
\]
If we take both $R_1$ and $R_2$ to be trivial, then the HSS algorithm
collapses back into the HS Gibbs sampler.  In
Appendix~\ref{app:theory}, we develop theoretical results for the HSS
algorithm.  We begin by showing that the HSS algorithm is reversible,
which allows us to prove analogues for the HSS algorithm of the strong
theoretical results that have been established for the basic sandwich
algorithm.  In particular, we prove that the HSS algorithm is
\textit{always} at least as good as HS Gibbs in terms of asymptotic
variance, and that the HSS Markov chain converges at least as fast as
the HS Gibbs chain as long as the Markov operators associated with
$R_1$ and $R_2$ are both \textit{positive}.  (All of the $R$s employed
in this paper, including the trivial $R$, yield positive Markov
operators - see \citet{hobert2008theoretical}.)  One important consequence of the convergence rate result is that, when
$R_1$ and $R_2$ are both positive operators, geometric ergodicity of the
HS Gibbs Markov chain implies that of the HSS chain.

This is extremely useful in practice because the HS Gibbs algorithm is
much simpler than the HSS algorithm, and hence much easier to analyze.

We should point out that \citet{pal2015improving} also developed an
alternative to SS and RS Gibbs for Bayesian latent data models that is
based on the sandwich methodology of \citet{hobert2008theoretical}.
Unfortunately, it is difficult to obtain theoretical results for their
algorithm because the corresponding Markov chain is not reversible.

Recall that near the end of the Introduction we considered a
generalization in which $U$ is partitioned into three or more pieces,
and we wrote the corresponding conditional densities as $f_{X_j |
  X_{-j}, Z}$ for $j=1,\dots,s$.  It is a simple matter to extend the
methodology described above to this more general case.  Indeed, for
fixed $X_{-j} = x_{-j}$, let $R_j(z,dz';x_{-j})$ denote a Mtf on
$\mathsf{Z}$ that is reversible with respect to $f_{Z |
  X_{-j}}(z|x_{-j})$.  Define
\[
k_j(x'_j | x_j; x_{-j}) = \int_{\mathsf{Z}} f_{X_j | X_{-j},
  Z}(x'|x_{-j},z') R_j(z,dz';x_{-j}) f_{Z|X_j, X_{-j}}(z|x_j,x_{-j})
\, dz \;.
\]
At each step of the generalized version of HSS, we choose among
$k_1,\ldots,k_s$ according to positive probabilities $a_1,\dots,a_s$
in the usual way, and apply the chosen $k_j$.  All of the theoretical
results that we establish for the HSS algorithm in
Appendix~\ref{app:theory} can be easily extended to this
generalization.

\subsection{Student's $t$ example}
\label{ssec:toy}

Consider again the first Student's $t$ model from the Introduction
(with prior $\pi(\mu,\sigma^2) \propto \sigma^{-2}
I_{\mathbb{R}^+}(\sigma^2)$).  We now develop a HSS algorithm for this
model.  Of course, we already know that this model can be handled by
the usual DA algorithm, so our HSS algorithm would never be used in
practice.  However, we believe that it is instructive to demonstrate
the construction of a HSS algorithm in a simple context where the
details of the model are not themselves overwhelming.

The first step is to identify the distributions of $Z | \mu$ and $Z |
\sigma^2$.  Let $z_{\cdot} = \sum_{i=1}^m z_i$.  It's easy to show
that
\begin{align*}
f_{Z|\mu}(z|\mu) & \propto \bigg( \sum_{i=1}^m z_i (w_i - \mu)^2
\bigg)^{-\frac{m}{2}} \Bigg[ \prod_{i=1}^m z_i \Bigg]^{\frac{\nu -
    1}{2}} \exp \bigg \{ \! -\frac{z_{\cdot} \nu}{2} \bigg \}
\prod_{i=1}^m I_{\mathbb{R}^+}(z_i) \;.
\end{align*}
Let $g \in \reals_+$.  It follows from the group theoretic arguments
in \citet{hobert2008theoretical} that the move $z \mapsto g z$ for
$z=(z_1,\ldots,z_m)$ is reversible with respect to $f_{Z|\mu}(z|\mu)$
if $g$ is drawn from the density proportional to $f_{Z|\mu}(gz | \mu)
g^{m-1}$.  (This is a low-dimensional move since, for fixed $z \in
\reals_+^m$, the points $gz$ all lie on a ray emanating from the
origin and passing through the point $z$.)  As a function of $g$, we
have
\begin{align*}
f_{Z|\mu}(gz | \mu) g^{m-1} & \propto g^{\frac{m \nu}{2} - 1} \exp
\bigg \{ \! -\frac{g \nu z_{\cdot}}{2} \bigg \} \Bigg[ \prod_{i=1}^m
  I_{\mathbb{R}^+}(z_i) \Bigg] I_{\mathbb{R}^+}(g) \;,
\end{align*}
which is a $\text{Gamma}\big( \frac{m \nu}{2}, \frac{\nu z_{\cdot}}{2}
\big)$ density.  Now, it's easy to show that
\begin{align*}
f_{Z|\sigma^2}(z|\sigma^2) & \propto \frac{1}{\sqrt{z_{\cdot}}} \exp
\bigg \lbrace \! -\frac{z_{\cdot} v(z,w)}{2\sigma^2} \bigg \rbrace
\Bigg[ \prod_{i=1}^m z_i \Bigg]^{\frac{\nu - 1}{2}} \exp \bigg \{ \!
-\frac{z_{\cdot} \nu}{2} \bigg \} \prod_{i=1}^m I_{\mathbb{R}^+}(z_i)
\;,
\end{align*}
where
\[
v(z,w) = \frac{1}{z_{\cdot}} \sum_{i=1}^m z_i \big( w_i - \theta(z,w)
\big)^2 \;,
\]
and $\theta(z,w) = \frac{1}{z_{\cdot}} \sum_{i=1}^m z_i w_i$.  Using
the same transformation, $z \mapsto g z$, we need to sample $g$ from
the density proportional to $f_{Z|\sigma^2}(gz | \sigma^2) g^{m-1}$.
A straightforward calculation shows that, as a function of $g$, we
have
\begin{align*}
f_{Z|\sigma^2}(gz | \sigma^2) g^{m-1} & \propto g^{\frac{m(\nu
    +1)-3}{2}} \exp \left\lbrace -g z_. \bigg(
\frac{v(z,w)}{2\sigma^2} + \frac{\nu}{2} \bigg) \right \rbrace \;,
\end{align*}
which is a $\text{Gamma} \Big( \frac{m(\nu + 1) - 1}{2} , z_. \big(
\frac{v(z,w)}{2\sigma^2} + \frac{\nu}{2} \big) \Big)$ density.

Fix a selection probability $r \in (0,1)$ and consider the Markov
chain $\{(\tilde{\mu}_n,\tilde{\sigma}^2_n)\}_{n=0}^{\infty}$ with
state space $\reals \times \reals_+$.  The HSS algorithm proceeds as
follows.  If the current state is $(\tilde{\mu}_n,\tilde{\sigma}^2_n)
= (\mu,\sigma^2)$, then we simulate the next state,
$(\tilde{\mu}_{n+1},\tilde{\sigma}^2_{n+1}),$ by performing the
following two steps:

\baro \vspace*{2mm}
\noindent {\rm Iteration $n+1$ of the HSS algorithm for the Student's
  $t$ example:}
\begin{enumerate}
\item Draw $Z_1, \ldots, Z_m$ independently, with $$Z_i \sim
  \text{Gamma} \left( \frac{\nu + 1}{2}, \frac{1}{2} \left( \frac{(w_i
    - \mu)^2}{\sigma^2} + \nu \right) \right) \;,$$ call the observed
  values $z=(z_1,\ldots,z_m),$ and, independently, draw $W \sim
  \text{Uniform}(0,1).$

\item
\begin{enumerate}
\item If $W \le r,$ draw $$g \sim \text{Gamma} \bigg( \frac{m \nu}{2},
  \frac{\nu z_{\cdot}}{2} \bigg) \,,$$ then draw $$\sigma^{*2} \sim
  \text{IG} \bigg( \frac{m}{2}, \frac{1}{2} \sum_{i=1}^m g z_i (y_i -
  \mu)^2 \bigg),$$ and set $(\tilde{\mu}_{n+1},\tilde{\sigma}^2_{n+1})
  = (\mu,\sigma^{*2}).$

\item Otherwise if $r < W \le 1,$ draw $$g \sim \text{Gamma} \bigg(
  \frac{m(\nu + 1) - 1}{2} , z_. \Big( \frac{v(z,w)}{2\sigma^2} +
  \frac{\nu}{2} \Big) \bigg) \;,$$ and then draw $$\mu^* \sim \text{N}
  \left( \theta(z,w), \frac{\sigma^2}{g z_.} \right),$$ and set
  $(\tilde{\mu}_{n+1},\tilde{\sigma}^2_{n+1}) = (\mu^*,\sigma^2)$.
\end{enumerate}
\end{enumerate}
\vspace*{-3mm}
\barba
\bigskip

In terms of computation, the difference between running one iteration
of this HSS algorithm versus one iteration of the HS Gibbs sampler
upon which it is based is a single draw from the gamma distribution.
Thus, if $m$ is even moderately large, this extra draw would add
relatively little to the overall computational effort of the HS Gibbs
algorithm.

\subsection{General linear mixed model example}
\label{ssec:glmm}

\citet{abrahamsen2019} introduced and analyzed the HS Gibbs sampler
described in Section~\ref{sec:glmm}, but they did not consider adding
sandwich steps to their algorithm.  In this subsection, we develop a
HSS algorithm with a single sandwich step based on the conditional
density $\pi(\tau|\theta,y)$.  (It is much more difficult to construct
a sandwich step based on $\pi(\tau|\lambda,y)$.)  A routine
calculation shows that
\[
\pi(\tau|\theta,y) \propto \left( \frac{\norm{y-W\theta}^2}{2} +
\frac{\beta^T D_{\tau}^{-1} \beta}{2} + b_0 \right)^ {-
  \left(\frac{N}{2}+\frac{p}{2}+a_0 \right)} \prod_{j=1}^p
\tau_j^{c-\frac{3}{2}} e^{-d \tau_j} I_{\reals_+}(\tau_j) \,.
\]
As in the previous subsection, the move $\tau \mapsto g \tau$ is
reversible with respect to $\pi(\tau|\theta,y)$ if $g$ is drawn from
the density proportional to $\pi(g \tau|\theta,y) g^{p-1}
I_{\reals_+}(g)$.  Now, as a function of $g$,
\[
\pi(g \tau | \theta, y) \propto \left( \frac{\norm{y-W\theta}^2}{2} +
\frac{g^{-1} \beta^T D_{\tau}^{-1} \beta}{2} + b_0 \right)^ {-
  \left(\frac{N}{2}+\frac{p}{2}+a_0 \right)} g^{p(c-\frac{3}{2})}
e^{-g \left( d \sum_{j=1}^p \tau_j \right)} \prod_{j=1}^p
I_{\reals_+}(\tau_j) \;,
\]
so the density from which $g$ must be drawn is given by
\[
h(g ; \tau, \theta, y) \propto
\frac{g^{\frac{N}{2}+cp+a_0-1-s}}{\left( \frac{\beta^T D_{\tau}^{-1}
    \beta}{2} + g \left( \frac{\norm{y-W\theta}^2}{2} + b_0 \right)
  \right)^{\frac{N}{2}+\frac{p}{2}+a_0}} \; \left[ g^s e^{-g \left( d
    \sum_{j=1}^p \tau_j \right)} \right] I_{\reals_+}(g) \;,
\]
where $s > 0$ is a free parameter.  So,
\begin{equation}
\label{eq:hg}
h(g ; \tau, \theta, y) \propto \frac{g^{\frac{N}{2}+cp+a_0-1-s}}{(1 +
  C g)^{\frac{N}{2}+\frac{p}{2}+a_0}} \; \left[ g^s e^{-g \left( d
    \sum_{j=1}^p \tau_j \right)} \right] I_{\reals_+}(g) \;,
\end{equation}
where 
\[
C = \frac{\norm{y-W\theta}^2 + 2 b_0}{\beta^T D_{\tau}^{-1}\beta} \;.
\]
If we choose $s \in \Big( \max \Big\{ 0, \; p \big(c - \frac{1}{2}
\big) \Big \} , \frac{N}{2} + cp + a_0 \Big)$, then two things happen:
(1) the first term on the right-hand side of ~\eqref{eq:hg} is
proportional to a scaled $F$ density, and (2) the second term is
bounded. In fact, the second term achieves its maximum at $\hat{g} = s
\big( d \sum_{j=1}^p \tau_j \big)^{-1}$.  Thus, we can use a simple
accept/reject algorithm with an $F$ candidate to draw from $h.$ In
particular, let $\nu_1 = N + 2cp + 2a_0 -2s$ and $\nu_2 = p(1-2c) +
2s.$ Here's the algorithm.

\baro \vspace*{2mm}
\noindent {\rm Accept/Reject algorithm for $h$:}
\begin{enumerate}
\item Draw $V^* \sim F(\nu_1,\nu_2),$ set $V = (V^* \nu_1) / (C
  \nu_2),$ and independently draw $U \sim \text{Uniform}(0,1).$

\item If $$U \le \left( \frac{d V \sum_{j=1}^p \tau_j}{s} \right)^s \;
  e^{s - d V \sum_{j=1}^p \tau_j},$$ then accept $V$ as a draw from
  ~\eqref{eq:hg}, otherwise return to 1.
\end{enumerate}
\vspace*{-3mm}
\barba
\bigskip

If $r \in (0,1)$ is the selection probability, then our HSS algorithm
proceeds as follows.  Let the current state of the chain be
$(\theta_n, \lambda_n) = (\theta, \lambda)$.  First, draw $\tau \sim
\pi(\tau | \theta, \lambda, y)$, and then flip an $r$-coin.  If the
coin comes up heads, we move to $(\theta_{n+1}, \lambda_{n+1}) =
(\theta, \lambda^*)$ by first drawing $g \sim h(\cdot ; \tau, \theta,
y)$ and then drawing $\lambda^* \sim \pi(\lambda| \theta, g \tau, y)$.
If the coin comes up tails, we move to $(\theta_{n+1}, \lambda_{n+1})
= (\theta^*, \lambda)$ by drawing $\theta^* \sim \pi(\theta | \lambda,
\tau, y)$.  Another, perhaps simpler, way to describe the HSS
algorithm is via a simple modification of the HS Gibbs algorithm described
in Section~\ref{sec:glmm}.  Step 1 remains exactly the same.  In step
2, if $r < W \le 1$, then, again, nothing changes.  However, if $W \le
r$, then, instead of using $\tau$ from step 1, we draw $g \sim h(\cdot
; \tau, \theta, y)$, and use $g \tau$ in place of $\tau$.

It follows from Proposition~\ref{prop:main_comp} in
Appendix~\ref{app:theory} that, whenever the HS Gibbs sampler of
Section~\ref{sec:glmm} is geometrically ergodic, so is our HSS
algorithm.  Recall that some empirical results for this HSS algorithm
are depicted alongside the results for the HS, SS, and RS Gibbs
samplers in Figure~\ref{fig:simplot} of Section~\ref{sec:glmm}.  In
that example, the rejection sampler is quite efficient, with an
acceptance probability of more than 70\% in each of the three
simulations settings considered.  The per iteration computational cost
of HS Gibbs obviously grows with $p$ while the extra cost associated
with rejection sampling is basically constant in $p$.  As a result, in
the second and third simulation settings, the HSS algorithm was only
about 2\% slower than HS Gibbs, while in the first setting, the HSS
algorithm is substantially slower than HS Gibbs.  Note that the
performance of the rejection sampler is a function of $C$ and
$\sum_{j=1}^p \tau_j$. For these simulations, we developed a table in
a preliminary offline investigation to decide the appropriate value of
the free parameter $s$ for a given $\big( C,\sum_{j=1}^p \tau_j \big)$
pair.

\section{Bayesian Linear Regression with Scale Mixtures of Normal Errors}
\label{sec:smn}

In this section, we provide another example of a Bayesian model that
leads to a highly intractable posterior distribution that lends itself
to the HS Gibbs sampler.  Let $Y_1,\ldots,Y_m$ be independent random
variables from the linear regression model
\begin{equation}
\label{eq:smnmodel}
Y_i = x_i^{T}\beta + \sigma\epsilon_i \;,
\end{equation}
where $x_i$ is a $p \times 1$ vector of known covariates associated
with $Y_i$, $\beta$ is a $p \times 1$ vector of unknown regression
coefficients, $\sigma \in (0,\infty)$ is an unknown scale parameter,
and $\epsilon_1,\ldots,\epsilon_m$ are iid errors. The standard
assumption that the errors are Gaussian is often inappropriate, e.g.,
when the data contain outliers. Various heavy-tailed alternatives can
be constructed as scale mixtures of the Gaussian density. Consider an
error density of the form
\begin{equation}
\label{eq:smndensity}
f_H(\epsilon) = \int_0^\infty
\frac{\sqrt{z}}{\sqrt{2\pi}}\exp\left\lbrace -
\frac{z}{2}\epsilon^2\right\rbrace \, dH(z) \;,
\end{equation}
where $H$ is the distribution function of some non-negative random
variable.  By varying the mixing distribution $H$, many symmetric and
unimodal distributions can be constructed.  Thus, datasets with
various types of tail behavior (particularly with heavier tails than
the normal) are often modeled by choosing a distribution from this
class. In this section, we consider a Bayesian analysis of the linear
regression model ~\eqref{eq:smnmodel} when the errors
$\epsilon_1,\ldots,\epsilon_m$ are iid random variables with the
general scale mixture of normals density $f_H$ given in
~\eqref{eq:smndensity}.  There are several different prior
distributions available that lead to conditional distributions with
standard forms. \citet{hobert2018convergence} consider a standard
improper prior and show that a DA algorithm is available.  A DA
algorithm is also available in the case where we specify a proper
conditionally conjugate prior on $(\beta,\sigma^2)$ by setting $\beta
| \sigma^2 \sim \mbox{N}_p(\mu,\sigma^2 \Sigma)$ and $\sigma^2 \sim
\mbox{IG}(\alpha,\gamma)$.  Throughout this section, we will instead
consider the proper prior which takes $\beta$ and $\sigma^2$ to be
\textit{a priori} independent with $\beta \sim \mbox{N}_p
(\mu,\Sigma)$ and $\sigma^2 \sim \mbox{IG}(\alpha,\gamma)$.  This
slight change to the prior makes the DA algorithm difficult to
implement, but the HS Gibbs sampler is a viable alternative.  We now
provide the details.

Let $y = (y_1,\ldots,y_m)$ denote the observed data.  Let $X$ denote
the $m \times p$ matrix whose $i$th row is $x_i^{T}$.  We assume
throughout that $m \ge \max\{2,p\}$.  We also assume that $H$ has a
density, $h$, with respect to Lebesgue measure on $\mathbb{R}_+$.
Letting $p_H(y|\beta,\sigma^2)$ denote the joint density of the data
from the linear regression model, the posterior density is given by
\begin{align*}
\pi(\beta,\sigma^2|y) & \propto p_H(y|\beta,\sigma^2)
\pi(\beta,\sigma^2) \\ & \propto \Bigg[ \prod_{i=1}^m
  \frac{1}{\sigma}f_H\left(\frac{y_i - x_i^{T} \beta}{\sigma}\right)
  \Bigg] \pi(\beta,\sigma^2) \\ & \propto \Bigg[ \prod_{i=1}^m
  \int_{\reals_+} \frac{\sqrt{z_i}}{\sqrt{2\pi\sigma^2}}
  \exp\left\lbrace
  -\frac{z_i}{2}\frac{(y_i-x_i^{T}\beta)^2}{\sigma^2}\right\rbrace
  h(z_i) \, dz_i \Bigg] \\ & \hspace*{8mm} \times
(\sigma^2)^{-\alpha-1}\exp \left\lbrace -\frac{\gamma}{\sigma^2}
\right\rbrace \exp \left\lbrace
-\frac{(\beta-\mu)^T\Sigma^{-1}(\beta-\mu)}{2} \right\rbrace
I_{\mathbb{R}^+}(\sigma^2) \;.
\end{align*}
Define the complete data posterior density as
\begin{align*}
\pi(\beta,\sigma^2,z|y) &= \prod_{i=1}^m
\frac{\sqrt{z_i}}{\sqrt{2\pi\sigma^2}} \exp\left\lbrace
-\frac{z_i}{2}\frac{(y_i-x_i^{T}\beta)^2}{\sigma^2}\right\rbrace
h(z_i) \\ & \times (\sigma^2)^{-\alpha-1}\exp \left\lbrace
-\frac{\gamma}{\sigma^2} \right\rbrace \exp \left\lbrace
-\frac{(\beta-\mu)^T\Sigma^{-1}(\beta-\mu)}{2} \right\rbrace
I_{\mathbb{R}^+}(\sigma^2) \;,
\end{align*}
and note that $\int_{\reals_+^m} \pi(\beta,\sigma^2,z|y) \, dz =
\pi(\beta,\sigma^2|y)$, so that $z = (z_1,\ldots,z_m)$ constitutes
latent data.  We now state the conditional densities needed for the HS
Gibbs sampler.  First, conditional on $(\beta, \sigma^2, y)$, $z_1,
\ldots, z_m$ are independent, and the conditional density of $z_i$
given $(\beta, \sigma^2, y_i)$ is given by
\begin{equation}
\label{eq:zcond}
  \pi(z_i|\beta,\sigma^2,y_i) \propto z_i^{\frac{1}{2}} \exp \left\{ -
  \frac{z_i}{2} \frac{(y_i - x_i^T \beta)^2}{\sigma^2} \right\} h(z_i)
  \;.
\end{equation}
In some cases, this density turns out to be standard.  For example,
when $h$ is a gamma density, then so is $\pi(z_i|\beta,\sigma^2,y_i)$,
and when $h$ is inverted gamma, then $\pi(z_i|\beta,\sigma^2,y_i)$ is
generalized inverse Gaussian.  Even when it's not a standard density,
as long as one can make draws from $h$, then $h$ can be used as the
candidate in a simple rejection sampler.

Next, let $Q$ be an $m \times m$ diagonal matrix whose $i$th diagonal
element is $z_i^{-1}$.  We have
\[
\sigma^2 \; | \; \beta, z, y \sim \text{IG}\left( \frac{m}{2} +
\alpha, \frac{(y-X\beta)^T Q^{-1} (y-X\beta) + 2\gamma}{2} \right) \;.
\]
Finally, $\beta \; | \; \sigma^2, z, y \sim \mbox{N}_p \left( \mu',
\sigma^2 \Sigma' \right)$, where
\[
\mu' = \left( X^T Q^{-1} X + \sigma^2 \Sigma^{-1} \right)^{-1} \left(
X^T Q^{-1} y + \sigma^2 \Sigma^{-1} \mu \right) \hspace{5mm}
\mbox{and} \hspace{5mm} \Sigma' = \left( X^T Q^{-1} X + \sigma^2
\Sigma^{-1} \right)^{-1} \;.
\]
The HS Gibbs sampler is based on the Markov chain $\Phi = \{
(\beta_n,\sigma_n^2)\}_{n=0}^{\infty}$ with state space $\mathsf{X} =
\reals^p \times \reals_+$ and selection probability $r \in (0,1).$ The
dynamics of $\Phi$ are defined by the following two step procedure for
moving from $(\beta_n,\sigma_n^2) = (\beta,\sigma^2)$ to
$(\beta_{n+1},\sigma_{n+1}^2).$

\newpage
\baro \vspace*{2mm}
\noindent {\rm Iteration $n+1$ of the hybrid scan Gibbs sampler:}
\begin{enumerate}
\item Draw $Z_1, \ldots, Z_m$ independently with $$Z_i \sim \text{the
  density proportional to} \; z_i^{\frac{1}{2}} \exp \left\{ -
  \frac{z_i}{2} \frac{(y_i - x_i^T \beta)^2}{\sigma^2} \right\} h(z_i)
  \;,$$ call the observed values $z = (z_1,\ldots,z_m),$ and,
  independently, draw $W \sim \text{Uniform}(0,1).$

\item 
\begin{enumerate}
\item If $W \le r,$ draw $$\sigma^{*2} \sim \text{IG}\left(
  \frac{m}{2} + \alpha, \frac{(y-X\beta)^T Q^{-1} (y-X\beta) +
    2\gamma}{2} \right),$$ and set $(\beta_{n+1},\sigma_{n+1}^2) =
  (\beta,\sigma^{*2}).$

\item Otherwise if $r < W \le 1,$ draw $$\beta^* \sim \mbox{N}_p
  \left( \mu', \sigma^2 \Sigma' \right),$$ and set
  $(\beta_{n+1},\sigma_{n+1}^2) = (\beta^*,\sigma^2).$
\end{enumerate}
\end{enumerate}
\vspace*{-3mm}
\barba
\bigskip

We now provide convergence rate results for this HS algorithm and the
corresponding SS Gibbs sampler.  Let $\hat{\Phi} = \{
(\hat{\beta}_n,\hat{\sigma}_n^2)\}_{n=0}^{\infty}$ denote the Markov
chain defined by the following Mtd:
\[
k_{\text{G}} (\beta, \sigma^2 | \hat{\beta},\hat{\sigma}^2) =
\int_{\mathbb{R}_+^n} \pi(\beta|\sigma^2,z,y)
\pi(\sigma^2|\hat{\beta},z,y) \pi(z|\hat{\beta},\hat{\sigma}^2,y) \,
dz \;.
\]
Of course, this is just the Markov chain that one is left with when
one runs the three-block SS Gibbs sampler and ignores the latent data.
It is well known that this chain has exactly the same convergence rate
as the SS Gibbs chain.  The following result, which is proven in
Appendix~\ref{app:proof}, provides sufficient conditions for each of
these algorithms to be geometrically ergodic.

\begin{theorem}
\label{thm:smn}
The following results hold for any mixing density $h.$
\begin{enumerate}[(i)]
\item Suppose there exist constants $0 \leq \psi_1 < 1$ and $L_1 \in
  \reals$ which do not depend on $\beta$ or $\sigma^2$ such that
\begin{equation} 
\label{ine:driftgibbs}
\frac{\sum_{i=1}^m E[z_i|\beta,\sigma^2,y](y_i - x_i^T \beta)^2}{m + 2
  \alpha - 2} \leq \psi_1 \left[ \sum_{i=1}^m (y_i - x_i^T \beta)^2 +
  \beta^T \Sigma^{-1} \beta + \sigma^2 + \frac{1}{\sigma^2} \right] +
L_1 \;
\end{equation}
for every $\beta \in \reals^p, \sigma^2 \in \reals_+$.  Then
$\hat{\Phi}$ is geometrically ergodic.

\item Suppose there exist constants $\psi_2 \in \reals_+$, $0 \leq
  \psi_3 < 1$, and $L_2 \in \reals$ which do not depend on $\beta$ or
  $\sigma^2$ such that
\begin{equation} 
\label{ine:drifthybrid}
\frac{\sum_{i=1}^m E[z_i|\beta,\sigma^2,y](y_i - x_i^T \beta)^2}{m + 2
  \alpha - 2} \leq \psi_2 \Bigg[ \sum_{i=1}^m (y_i - x_i^T \beta)^2 +
  \, \beta^T\Sigma^{-1}\beta \Bigg] + \psi_3 \left( \sigma^2 +
\frac{1}{\sigma^2} \right) + L_2 \;
\end{equation}
for every $\beta \in \reals^p, \sigma^2 \in \reals_+$.  Then $\Phi$ is
geometrically ergodic for all $r \in (0,1)$.
\end{enumerate}
\end{theorem}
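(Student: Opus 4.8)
The plan is to prove both statements with the classical drift-and-minorization route to geometric ergodicity \citep[see, e.g.,][]{jones2001honest}: exhibit a drift function $V:\reals^p\times\reals_+\to[1,\infty)$ and constants $\lambda\in(0,1)$, $b<\infty$ with $PV\le\lambda V+b$ for the relevant Mtd $P$, together with a minorization condition on a sublevel set $\{V\le d\}$. Both chains are already known to be Harris ergodic, so this suffices. Guided by the form of the hypotheses, I would take
\[
V(\beta,\sigma^2)=\eta\Big[\,\textstyle\sum_{i=1}^m(y_i-x_i^T\beta)^2+\beta^T\Sigma^{-1}\beta\,\Big]+\sigma^2+\frac{1}{\sigma^2}+1 =: \eta\,A(\beta)+B(\sigma^2)+1,
\]
where the weight $\eta>0$ is chosen later (for part (i) one may take $\eta=1$, while part (ii) will require $\eta$ large).

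I would first reduce the drift to one-step conditional moments, conditioning on the latent $z$ (and, in the systematic-scan case, on the freshly drawn $\sigma^2$ as well). The $\sigma^2$-coordinate is where the hypotheses enter. Since $\sigma^2\mid\beta,z,y$ is inverse gamma with shape $m/2+\alpha$ and scale $\tfrac12\sum_i z_i(y_i-x_i^T\beta)^2+\gamma$, its mean is $\big(\sum_i z_i(y_i-x_i^T\beta)^2+2\gamma\big)/(m+2\alpha-2)$; averaging over $z\mid\beta,\sigma^2$ shows that the expected next value of $\sigma^2$ equals the left-hand side of \eqref{ine:driftgibbs}/\eqref{ine:drifthybrid} plus the constant $2\gamma/(m+2\alpha-2)$, so the hypotheses bound it precisely. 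The reciprocal moment is even easier: $E[1/\sigma^2_{\mathrm{new}}]=(m/2+\alpha)\big/\big(\tfrac12\sum_i z_i(\cdots)+\gamma\big)\le(m/2+\alpha)/\gamma$, a constant, since the scale is at least $\gamma$. For the $\beta$-update $\beta^*\sim\mbox{N}_p(\mu',\sigma^2\Sigma')$, the Gaussian-variance contributions to $E[A(\beta^*)]$ are $\sigma^2\,\mathrm{tr}(X\Sigma'X^T)+\sigma^2\,\mathrm{tr}(\Sigma^{-1}\Sigma')$, and since $\Sigma'=(X^TQ^{-1}X+\sigma^2\Sigma^{-1})^{-1}\preceq\sigma^{-2}\Sigma$ these are at most $\mathrm{tr}(X\Sigma X^T)+p$, again constants.

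The heart of the argument, and the step I expect to be the main obstacle, is controlling the mean contribution $A(\mu')=\norm{y-X\mu'}^2+(\mu')^T\Sigma^{-1}\mu'$ of the $\beta$-update, where $\mu'=(X^TQ^{-1}X+\sigma^2\Sigma^{-1})^{-1}(X^TQ^{-1}y+\sigma^2\Sigma^{-1}\mu)$ depends on the latent data through $Q^{-1}=\mathrm{diag}(z_1,\ldots,z_m)$ in a genuinely nonlinear fashion. The goal is to show that $E\big[A(\mu')\mid\beta,\sigma^2\big]$ is bounded above by a constant that does not grow with $A(\beta)$ or $B(\sigma^2)$. The mechanism is that $\mu'$ is a ridge/shrinkage estimator pulled toward the prior mean $\mu$: completing the square in the quadratic minimized by $\mu'$ gives $\norm{y-X\mu'}^2_{Q^{-1}}+\sigma^2(\mu'-\mu)^T\Sigma^{-1}(\mu'-\mu)\le\sum_i z_i(y_i-x_i^T\mu)^2$, so the $Q^{-1}$-weighted fitted residuals and the shrinkage of $\mu'$ toward $\mu$ are both tightly constrained. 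The delicate point is passing from these $Q^{-1}$-weighted bounds to a bound on the \emph{unweighted} $A(\mu')$ and then integrating over $z\mid\beta,\sigma^2$; this is where the moment structure of $\pi(z\mid\beta,\sigma^2,y)$ must be exploited so that the final bound is a genuine constant.

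Granting that key estimate, the two parts are assembled differently according to the update geometry, and this is exactly what dictates the two forms of the hypothesis. For the systematic scan chain $\hat\Phi$, every iteration refreshes both coordinates in the order $z\to\sigma^2\to\beta$, so the freshly drawn $A(\beta_{\mathrm{new}})$ contributes only a constant while $E[B(\sigma^2_{\mathrm{new}})]$ is bounded by $\psi_1A(\hat\beta)+\psi_1B(\hat\sigma^2)$ plus a constant via \eqref{ine:driftgibbs}; hence $PV\le\psi_1V+b$ and $\lambda=\psi_1<1$ works even with $\eta=1$. For the hybrid scan chain $\Phi$, each iteration updates only one coordinate (recall the mixture displayed after the algorithm): with probability $r$ the block $A(\beta)$ is left untouched and survives with coefficient $r$, and with probability $1-r$ the block $B(\sigma^2)$ survives with coefficient $1-r$. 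The contraction on $B$ must therefore be supplied entirely by the probability-$r$ update of $\sigma^2$, which is precisely why \eqref{ine:drifthybrid} insists on $\psi_3<1$ in front of $\sigma^2+\sigma^{-2}$; the coefficient on $B$ becomes $1-r(1-\psi_3)<1$. The coefficient on $A$ works out to $r+r\psi_2/\eta$ after dividing through by the weight $\eta$, so taking $\eta$ large makes this strictly less than $1$ for \emph{any} positive $\psi_2$ — which is why \eqref{ine:drifthybrid} may leave $\psi_2$ unconstrained. Finally, the minorization is routine: the sublevel sets $\{V\le d\}$ are compact in $\reals^p\times\reals_+$, with $\beta$ bounded and $\sigma^2$ bounded away from $0$ and $\infty$, and the relevant transition density is jointly continuous and strictly positive there; for the hybrid scan chain, whose one-step kernel is supported on a pair of lines and is not absolutely continuous, I would run this argument on the two-step kernel. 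The drift/minorization theorem then delivers geometric ergodicity in each case.
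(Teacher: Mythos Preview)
Your overall architecture is correct, and for part (ii) your device of inserting a weight $\eta$ into the drift function is a genuine alternative to what the paper does. The paper uses the unweighted $V(\beta,\sigma^2)=A(\beta)+B(\sigma^2)$, which yields a drift inequality with contraction factor $\max\{r(1+\psi_2),\,r\psi_3+(1-r)\}$; this is strictly less than $1$ only when $r<(1+\psi_2)^{-1}$, so the paper establishes drift for \emph{small} $r$ and then invokes a separate kernel-comparison lemma (based on \citet{jones2014convergence}) to transfer geometric ergodicity to all $r\in(0,1)$. Your weighting produces the coefficient $r(1+\psi_2/\eta)$ on the $A$-block, and choosing $\eta$ large gives a valid drift for every $r$ directly, with no transfer step. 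This is a clean simplification. (Incidentally, the paper also dispenses with explicit minorization: since $V$ is unbounded off compact sets, Lemma~15.2.8 of \citet{meyn2012markov} does the job, so your two-step-kernel minorization for the HS chain is unnecessary.)

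However, there is a real gap at the step you yourself flag as the main obstacle: your proposed route to bounding $A(\mu')$ uniformly in $(z,\sigma^2)$ does not work as sketched. The optimality inequality you write down,
\[
\norm{y-X\mu'}^2_{Q^{-1}}+\sigma^2(\mu'-\mu)^T\Sigma^{-1}(\mu'-\mu)\;\le\;\sum_{i=1}^m z_i(y_i-x_i^T\mu)^2,
\]
controls only the $Q^{-1}$-weighted residual and the $\sigma^2$-scaled prior deviation; neither yields the \emph{unweighted} $A(\mu')$ without further input, since individual $z_i$ and $\sigma^2$ can be arbitrarily small, and the right-hand side is itself unbounded in $z$. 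The paper does not attempt this passage at all. Instead it works directly with the explicit formula for $\mu'$: writing $\tilde X=X\Sigma^{1/2}$ and $\tilde Q=\sigma^2 Q$, it bounds $\|\Sigma^{-1/2}\mu'\|^2$ in terms of $\big\|(\tilde X^T\tilde Q^{-1}\tilde X+I)^{-1}\tilde X^T\tilde Q^{-1}y\big\|^2$, expands this as a sum over $i$, and bounds each summand by
\[
C_i(\tilde X)\;=\;\sup_{c\in\reals^m_+}\;\tilde x_i^T\Big(\tilde x_i\tilde x_i^T+\textstyle\sum_{j\neq i}c_j\tilde x_j\tilde x_j^T+c_iI\Big)^{-2}\tilde x_i\;<\;\infty,
\]
a uniform linear-algebra bound established in \citet{khare2011spectral}. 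That lemma is the key technical ingredient, and nothing in your completing-the-square sketch substitutes for it.
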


\begin{remark}
Note that if \eqref{ine:driftgibbs} holds, then
\eqref{ine:drifthybrid} holds with $\psi_2 = \psi_3 = \psi_1$, and
$L_2 = L_1$.  So the sufficient condition for geometric ergodicity of
the HS Gibbs algorithm is weaker than the corresponding sufficient
condition for the SS Gibbs sampler.  Of course, we are dealing with
sufficient conditions here, so by no means does Theorem~\ref{thm:smn}
imply that HS Gibbs algorithm is geometrically ergodic more often than
the SS Gibbs sampler.  On the other hand, in a given situation, if it
is known that the HS algorithm is geometrically ergodic, and it is
unknown whether or not the same is true of the SS Gibbs sampler, then
one should probably use the HS algorithm.
\end{remark}

In order to actually apply Theorem~\ref{thm:smn}, we must specify $h$
so that we can calculate (or at least bound)
$E[z_i|\beta,\sigma^2,y]$.  For example, suppose that $h$ is a
$\text{Gamma}(\frac{\nu}{2},\frac{\nu}{2})$ density, which leads to a
Student's $t$ distribution with $\nu$ degrees of freedom for the
regression errors.  In this case, $z_i|(\beta,\sigma^2,y)$ is
\[
\mbox{Gamma} \bigg( \frac{\nu+1}{2}, \frac{(y_i - x_i^T \beta)^2 + \nu
  \sigma^2}{2\sigma^2} \bigg) \;,
\]
and
\[
E[z_i|\beta,\sigma^2,y] = \frac{\sigma^2 (\nu+1)}{(y_i - x_i^T
  \beta)^2 + \nu \sigma^2} \;.
\]
It follows that \eqref{ine:drifthybrid} is satisfied since
\begin{align}
  \label{eq:ia}
\frac{\sum_{i=1}^m E[z_i|\beta,\sigma^2,y](y_i - x_i^T \beta)^2}{m + 2
  \alpha - 2} & = \frac{1}{m + 2 \alpha - 2} \sum_{i=1}^m
\frac{\sigma^2 (\nu+1)(y_i - x_i^T \beta)^2}{(y_i - x_i^T \beta)^2 +
  \nu \sigma^2} \nonumber \\ & \le \frac{\nu+1}{\nu(m + 2 \alpha - 2)}
\sum_{i=1}^m (y_i - x_i^T \beta)^2 \;.
\end{align}
Thus, Theorem~\ref{thm:smn} implies that the HS Markov chain is
geometrically ergodic (without any additional assumptions).

Unfortunately, this argument doesn't work for the SS Gibbs sampler.
Indeed, \eqref{eq:ia} doesn't establish that \eqref{ine:driftgibbs} is
satisfied unless we make the additional assumption that that $\nu >
1/(m+2\alpha-3)$.  However, another upper bound on the left-hand side
of \eqref{eq:ia} is as follows:
\begin{equation}
  \label{eq:ib}
  \frac{\sum_{i=1}^m E[z_i|\beta,\sigma^2,y](y_i - x_i^T \beta)^2}{m +
    2 \alpha - 2} \le \frac{m(\nu+1)}{m + 2 \alpha - 2} \sigma^2 \;.
\end{equation}
Now, \eqref{eq:ib} will establish \eqref{ine:driftgibbs} if $\nu <
(2\alpha-2)/m$.  So Theorem~\ref{thm:smn} implies that the SS Gibbs
chain is geometrically ergodic if either $\nu > 1/(m+2\alpha-3)$ or
$\nu < (2\alpha-2)/m$.  Of course, if $1/(m+2\alpha-3) <
(2\alpha-2)/m$, then at least one of these two inequalities must hold.
However, when $\alpha$ is small, this is not the case.

Consider a second example where $h$ is taken to be an
$\text{IG}(\alpha,1)$ density.  Under this mixing density, the
regression errors have a generalized hyperbolic distribution, which
has tails that are heavier than Gaussian, but lighter than Student's
$t$ \citep[see, e.g.,][]{jung:hobe:2014}.  In this case,
$z_i|(\beta,\sigma^2,y)$ is
\[
\mbox{GIG} \bigg( \frac{1}{2} - \alpha, \frac{(y_i - x_i^T
  \beta)^2}{\sigma^2}, 2 \bigg) \;,
\]
and
\[
E[z_i|\beta,\sigma^2,y] = \frac{\sqrt{2 \sigma^2}}{\sqrt{(y_i - x_i^T
  \beta)^2}} \frac{\mbox{K}_{- \alpha + 3/2} \Big( \sqrt{\frac{2(y_i -
      x_i^T \beta)^2}{\sigma^2}} \Big)}{\mbox{K}_{- \alpha + 1/2}
  \Big( \sqrt{\frac{2(y_i - x_i^T \beta)^2}{\sigma^2}} \Big)} \;.
\]
\citet[][p. 62]{jungdiss} shows that
\[
\frac{\mbox{K}_{- \alpha + 3/2} \Big( \sqrt{\frac{2(y_i - x_i^T
      \beta)^2}{\sigma^2}} \Big)}{\mbox{K}_{- \alpha + 1/2} \Big(
  \sqrt{\frac{2(y_i - x_i^T \beta)^2}{\sigma^2}} \Big)} \le 1 +
\frac{\sqrt{\sigma^2}}{\sqrt{2(y_i - x_i^T \beta)^2}} \;.
\]
Hence, letting $C>0$ be an arbitrary positive constant, we have
\begin{align}
  \label{eq:iga}
\frac{\sum_{i=1}^m E[z_i|\beta,\sigma^2,y](y_i - x_i^T \beta)^2}{m + 2
  \alpha - 2} & \le \frac{1}{m + 2\alpha - 2} \sum_{i=1}^m \bigg[
  \frac{\sqrt{2 \sigma^2}}{C} \sqrt{C(y_i - x_i^T \beta)^2} + \sigma^2
  \bigg] \nonumber \\ & \le \frac{1}{m + 2\alpha - 2} \sum_{i=1}^m
\bigg[\frac{C (y_i - x_i^T \beta)^2}{2} + \frac{\sigma^2}{C} +
  \sigma^2 \bigg] \nonumber \\ & = \frac{C}{2(m + 2\alpha - 2)}
\sum_{i=1}^m (y_i - x_i^T \beta)^2 + \frac{m(C+1)}{C(m + 2\alpha - 2)}
\sigma^2 \;.
\end{align}
If $\alpha>1$, then we can find $C>0$ such that $\frac{m(C+1)}{C(m +
  2\alpha - 2)} < 1$.  Therefore, Theorem~\ref{thm:smn} implies that
the HS Gibbs chain is geometrically ergodic whenever $\alpha>1$.

Now, if we can find a single value of $C>0$ such that $m(C+1) < C(m +
2\alpha - 2)$ \textit{and} $C<2(m + 2\alpha - 2)$, then \eqref{eq:iga}
will imply that \eqref{ine:driftgibbs} holds.  The existence of such a
$C$ is equivalent to $\alpha$ and $m$ satisfying the following
inequality
\[
8 \alpha^2 + \alpha(4m-16) + 8 - 5m > 0 \;.
\]
Thus, Theorem~\ref{thm:smn} implies that the SS Gibbs chain is
geometrically ergodic if $\alpha > \big( 4-m+\sqrt{m(m+2)} \big)/4$.
This inequality holds for all $\alpha \ge \frac{5}{4}$, regardless of
the value of $m$, and it does hold for smaller values of $\alpha$ when
$m$ is fixed.  For example, if $m=2$, then we only need $\alpha > (1 +
\sqrt{2})/2 \approx 1.21$.

Once $h$ is specified, HSS algorithms can be created by adding
sandwich steps to the HS Gibbs sampler.  \citet{grantdiss} develops a
HSS algorithm with two sandwich steps for the case where $h$ is a
$\text{Gamma}(\frac{\nu}{2},\frac{\nu}{2})$ density.

\section{Discussion}
\label{sec:discussion}

We have introduced generic forms of the hybrid scan Gibbs sampler and
the hybrid scan sandwich algorithm, and we have shown that, under weak
regularity conditions, the latter is theoretically better than the
former.  Moreover, we have developed and studied specific versions of
these algorithms in the context of two different realistic Bayesian
hierarchical models.  It is clear that the hybrid scan algorithms are
quite flexible, and can be used in conjunction with a variety of
practical Bayesian models.  As another example, consider a
generalization of the model in~\eqref{eq:smnmodel} in which the error
density has both heavy tails and \textit{skewness}.
\citet{da2011skew} define a skew scale mixture of normal densities,
$f_{H,\lambda}(\epsilon)$, by
\begin{equation}
f_{H,\lambda}(\epsilon) = 2 f_H(\epsilon) \Phi(\lambda \epsilon)
\;,
\end{equation}
where $f_H(\epsilon)$ is the scale mixture of normal densities defined
at~\eqref{eq:smndensity}, $\Phi(\cdot)$ is the standard normal
cumulative distribution function, and $\lambda \in \mathbb{R}$ is a
fixed parameter that controls the skewness.  Combining the associated
likelihood with the same conjugate normal/inverse-gamma prior employed
in Section~\ref{sec:smn} gives rise to a posterior distribution that
is even more unwieldy than the one studied in Section~\ref{sec:smn}.
However, \citet{jungdiss} shows that there exist two sets of latent
variables, $z=(z_1,\ldots,z_m)$ and $t=(t_1,\ldots,t_m)$,
conditionally independent of one another given $(\beta,\sigma^2,y)$,
that give rise to a complete data posterior with the following
conditionals.  Conditional on $(\beta,\sigma^2,y)$, $z_1,\ldots,z_m$
are independent, and the density of $z_i$ given $(\beta,\sigma^2,y)$
is the same as~\eqref{eq:zcond}.  Also, conditional on
$(\beta,\sigma^2,y)$, $t_1,\ldots,t_m$ are independent, and the
density of $t_i$ given $(\beta,\sigma^2,y)$ is truncated normal.
Finally, $\sigma^2$ given $(\beta, z, t, y)$ is inverse-gamma, and
$\beta$ given $(\sigma^2,z,t,y)$ is multivariate
normal. Unfortunately, the distribution of $\sigma^2$ given $(z,t,y)$
is not available in closed form, so that the DA algorithm is not
straightforward to apply.  Each iteration of the HS Gibbs algorithm
proceeds, as usual, by updating (all of) the latent data, and updating
either $\beta$ or $\sigma^2$, depending on the outcome of the flip of
an $r$-coin.

Lastly, we reiterate that, as far as theoretical convergence rates go,
it is now generally accepted that one should, if possible, use a Monte
Carlo Markov chain that converges at a geometric rate, or at least a
rate fast enough to ensure that the corresponding MCMC estimators are
asymptotically normal \citep[see, e.g.,][]{roberts1998markov}.  Hence,
even if an alternative MCMC algorithm (such as SS or RS Gibbs) appears
marginally better than a geometrically ergodic HS Gibbs sampler
according to empirical measures, that algorithm should not be favored
over the HS Gibbs algorithm \textit{unless} it is known that the
alternative has an acceptably fast convergence rate.  At present, it
appears that the convergence rates of alternative MCMC algorithms for
the family of posteriors considered in Section~\ref{sec:glmm} (and, to
some extent, those considered in Section~\ref{sec:smn}) are not known.
In situations such as these, we recommend HS Gibbs for practical use
and remind the reader that it is no more difficult to implement than
its SS or RS counterparts.

\vspace*{6mm}

\noindent {\bf \large Acknowledgment}.  The second and fourth authors
were supported by NSF Grant DMS-15-11945.

\vspace*{8mm} 
\newpage
\noindent {\LARGE \bf Appendices}
\begin{appendix}

\vspace*{-3mm}

\section{Theory for the HSS Algorithm}
\label{app:theory}

We begin with some requisite background material on Markov
operators. In keeping with the notation in the Introduction, the
target density, $f_{X,Y}(x,y),$ can be used to define an inner
product
\[
\langle g_1, g_2 \rangle_{L^2_0} = \int_{\mathsf{X}} \int_{\mathsf{Y}}
g_1(x,y) g_2(x,y) f_{X,Y}(x,y) \, dy \, dx \;,
\]
and norm $\norm{g} = \sqrt{\langle g,g \rangle}$ on the Hilbert
space 
\[
L^2_0 = \left\{ g: \mathsf{X} \times \mathsf{Y} \to \reals :
\int_{\mathsf{X}} \int_{\mathsf{Y}} g^2(x,y) f_{X,Y}(x,y) \, dy \, dx
< \infty \; \text{and} \; \int_{\mathsf{X}} \int_{\mathsf{Y}} g(x,y)
f_{X,Y}(x,y) \, dy \, dx = 0 \right\} \;.
\]
To keep things simple, we assume throughout that $f_{X,Y}(x,y)$ is a
density with respect to Lebesgue measure, but we note that the results
actually hold much more generally - see, e.g., the set-up in
\citet{khare2011spectral}. The Mtd $k_1$ corresponds to a Markov
operator $K_1: L^2_0 \to L^2_0$ that takes $g \in L^2_0$ into
\[
(K_1 g)(x,y) = \int_{\mathsf{X}} g(x',y) k_1(x'|x;y) \, dx' \;.
\]
Now, if we define $K_2$ using $k_2$ in an analogous way, then it is
clear that the Markov operator associated with the HSS algorithm,
$K_{\text{HSS}} : L^2_0 \to L^2_0,$ is given by $K_{\text{HSS}} = r
K_1 + (1-r) K_2$, where $r \in (0,1)$ is the selection
probability.  Here is our first result.

\begin{proposition}
The Markov chain underlying the HSS algorithm is reversible.
\end{proposition}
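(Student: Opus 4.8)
The plan is to prove reversibility by showing that the Markov operator $K_{\text{HSS}}$ is self-adjoint on $L^2_0$. For a Markov chain with stationary density $f_{X,Y}$, reversibility is equivalent to self-adjointness of the associated operator, so this suffices. Since $K_{\text{HSS}} = r K_1 + (1-r) K_2$ is a real linear combination of the two operators, and a real linear combination of self-adjoint operators is again self-adjoint, it is enough to establish that $K_1$ and $K_2$ are each self-adjoint.

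First I would establish self-adjointness of $K_1$ directly. Fix $g_1, g_2 \in L^2_0$ and write $\langle K_1 g_1, g_2 \rangle_{L^2_0}$ as an iterated integral over $\mathsf{X} \times \mathsf{X} \times \mathsf{Y}$. The key step is to factor the joint density as $f_{X,Y}(x,y) = f_{X|Y}(x|y) f_Y(y)$, so that the outer $y$-integral carries only the weight $f_Y(y)$; this is legitimate precisely because $K_1$ updates $x$ while holding $y$ fixed. The inner integrals then involve the quantity $k_1(x'|x;y) f_{X|Y}(x|y)$, which has already been shown to be symmetric in $(x,x')$. Exploiting this symmetry and relabeling the dummy variables $x$ and $x'$ recasts the expression as $\langle g_1, K_1 g_2 \rangle_{L^2_0}$, giving self-adjointness of $K_1$. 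An identical argument with the roles of $\mathsf{X}$ and $\mathsf{Y}$ interchanged, using the analogous symmetry of $k_2(y'|y;x) f_{Y|X}(y|x)$, yields self-adjointness of $K_2$, and the proposition follows.

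I expect the main obstacle to be the bookkeeping step of translating the conditional reversibility of $k_1$ — a statement about the conditional density $f_{X|Y}(\cdot|y)$ for each fixed $y$ — into self-adjointness of $K_1$ on the full space $L^2_0$, whose inner product is weighted by the joint density $f_{X,Y}$. The resolution is the factorization above combined with the observation that $K_1$ acts only in the $x$-direction, so the factor $f_Y(y)$ simply passes through the manipulation untouched. I would also note in passing that $K_1$ and $K_2$ genuinely map $L^2_0$ into itself, since each leaves $f_{X,Y}$ invariant and is a contraction, but these are standard properties of Markov operators and need no separate argument.
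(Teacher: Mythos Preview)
Your proposal is correct and follows essentially the same approach as the paper: both reduce to showing that $K_1$ and $K_2$ are self-adjoint on $L^2_0$, and both establish this by exploiting the symmetry of $k_1(x'|x;y)f_{X|Y}(x|y)$ in $(x,x')$ (equivalently, $f_{X,Y}(x,y)k_1(x'|x;y)=f_{X,Y}(x',y)k_1(x|x';y)$) together with Fubini's theorem. The only cosmetic difference is that you factor $f_{X,Y}=f_{X|Y}f_Y$ and carry $f_Y(y)$ through explicitly, whereas the paper works directly with the joint density; the arguments are otherwise identical.
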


\begin{proof}
It suffices to show that $K_{\text{HSS}}$ is a self-adjoint
operator. We start by showing that $K_1$ is self-adjoint. First, it's
easy to see that $f_{X,Y}(x,y) k_1(x'|x;y) = f_{X,Y}(x',y)
k_1(x|x';y).$ It follows that
\begin{align*}
f_{X,Y}(x,y)(K_1 g)(x,y) &= f_{X,Y}(x,y) \int_{\mathsf{X}} g(x',y)
k_1(x'|x;y) \, dx' \\ &= \int_{\mathsf{X}} g(x',y) f_{X,Y}(x,y)
k_1(x'|x;y) \, dx' \\ &= \int_{\mathsf{X}} g(x',y) f_{X,Y}(x',y)
k_1(x|x';y) \, dx'
\end{align*}
Thus,
\begin{align*}
\langle K_1 g, h \rangle_{L^2_0} &= \int_{\mathsf{X}}
\int_{\mathsf{Y}} (K_1 g)(x,y) h(x,y) f_{X,Y}(x,y) \, dy \, dx \\ &=
\int_{\mathsf{X}} \int_{\mathsf{Y}} h(x,y) \left[ \int_{\mathsf{X}}
  g(x',y) f_{X,Y}(x',y) k_1(x|x';y) \, dx' \right] \, dy \, dx \\ &=
\int_{\mathsf{X}} \int_{\mathsf{Y}} \left[ \int_{\mathsf{X}} h(x,y)
  k_1(x|x';y) \, dx \right] g(x',y) f_{X,Y}(x',y) \, dy \, dx' \\ &=
\langle g, K_1 h \rangle_{L^2_0} \;,
\end{align*}
where the third equality follows from Fubini's theorem. Now an
analogous argument shows that $K_2$ is self-adjoint, and it follows
immediately that $r K_1 + (1-r) K_2$ is also self-adjoint.
\end{proof}

We now look more closely at the two criteria for comparing MCMC
algorithms that were mentioned in the Introduction: rate of
convergence and asymptotic variance.  Let $\Phi =
\{(X_n,Y_n)\}_{n=0}^{\infty}$ denote a generic Markov chain on
$\mathsf{X} \times \mathsf{Y}$ that is reversible with respect to
$f_{X,Y}$.  Assume further that $\Phi$ is Harris ergodic; that is,
aperiodic, irreducible and Harris recurrent.  Let $K$ denote the
corresponding Markov operator on $L^2_0$.  Let $L^2_{0,1} \subset
L^2_0$ denote the functions for which
\[
\int_{\mathsf{X}} \int_{\mathsf{Y}} g^2(x,y) f_{X,Y}(x,y) \, dy \, dx
= 1 \;.
\]
The norm of the operator $K$ is defined as
\[
\norm{K} = \sup_{g \in  L^2_{0,1}} \norm{K g} \;.
\]
(Since $K$ is self-adjoint, we also have $\norm{K} = \sup_{g \in
  L^2_{0,1}} |\langle K g , g \rangle_{L^2_0}|$.)  The quantity
$\norm{K},$ which takes values in $[0,1]$, represents the convergence
rate of $\Phi,$ with smaller values associated with faster
convergence.  In fact, $\Phi$ is geometrically ergodic if and only if
$\norm{K} < 1$ \citep{roberts1997geometric}. One way to choose between
two MCMC algorithms for the same problem is to favor the one whose
Markov operator has smaller norm.

Now let $g : \mathsf{X} \times \mathsf{Y} \to \reals$ be (non-constant
and) such that
\[
\int_{\mathsf{X}} \int_{\mathsf{Y}} g^2(x,y) f_{X,Y}(x,y) \, dy \, dx
< \infty \;.
\]
Let $\theta = \int_{\mathsf{X}} \int_{\mathsf{Y}} g(x,y) f_{X,Y}(x,y)
\, dy \, dx$, and let $\overline{g}_n = \frac{1}{n} \sum_{i=0}^{n-1}
g(X_n,Y_n)$.  If $\Phi$ is geometrically ergodic, then the Markov
chain CLT implies that there exists $\sigma^2_{g,K} \in (0,\infty)$
such that, as $n \to \infty,$ $\sqrt{n} (\overline{g}_n - \theta)
\cvgindist \text{N}(0,\sigma^2_{g,K}).$ If $g$ is square integrable
with respect to $f_{X,Y},$ but the CLT does not hold, then set
$\sigma^2_{g,K} = \infty.$ Suppose $\Phi^*$ is a second Markov chain
(with corresponding operator $K^*$) that satisfies all the properties
we have assumed $\Phi$ satisfies. If $\sigma^2_{g,K^*} <
\sigma^2_{g,K}$ for all square integrable $g,$ then we say that $K^*$
is more efficient than $K,$ and we write $K^* \succeq_E K$.

Before we can state the main result, we must define a few more
operators. First, let $\hat{L}^2_0$ denote the space of functions that
are square integrable and have mean zero with respect to
$f_{Y,Z}(y,z).$ We denote the inner product on this space by $\langle
\cdot, \cdot \rangle_{\hat{L}^2_0}.$ The Mtf $R_1(z,dz';y)$ defines an
operator $R_1 : \hat{L}^2_0 \to \hat{L}^2_0$ that takes $h \in
\hat{L}^2_0$ to $$(R_1 h)(y,z) = \int_{\mathsf{Z}} h(y,z')
R_1(z,dz';y) \;.$$ It follows immediately from ~\eqref{eq:reverse}
that $R_1$ is self-adjoint (with respect to $f_{Y,Z}$). Of course,
$R_1$ is a positive operator if $\langle R_1 h, h
\rangle_{\hat{L}^2_0} \geq 0$ for all $h \in \hat{L}^2_0.$ Let $R_2$
denote the analogous operator corresponding to the Mtf $R_2$, and let
$K_{\text{HS}}$ denote the Markov operator (on $L^2_0$) corresponding
to the HS Gibbs sampler.

\begin{proposition}
  \label{prop:main_comp}
  Suppose the Markov chains associated with $K_{\text{HSS}}$ and
  $K_{\text{HS}}$ are both Harris ergodic. Then $K_{\text{HSS}}
  \succeq K_{\text{HS}}$. If, in addition, $R_1$ and $R_2$ are both
  positive operators, then $\norm{K_{\text{HSS}}} \leq
  \norm{K_{\text{HS}}}.$
\end{proposition}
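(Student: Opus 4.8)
The plan is to recast both Markov operators in the composed form $T^*(\cdot)\,T$ that is standard for the ordinary sandwich algorithm, and then to import the operator-theoretic comparison arguments of \citet{hobert2008theoretical}. First I would introduce, for the first block, the conditional-expectation operator $T_1$ carrying a mean-zero function $g$ of $(x,y)$ to the function $(T_1 g)(y,z) = \int_{\mathsf{X}} g(x,y) f_{X|Y,Z}(x|y,z)\,dx$ of $(y,z)$, and verify that $T_1$ maps $L^2_0$ into $\hat{L}^2_0$ with adjoint $(T_1^* h)(x,y) = \int_{\mathsf{Z}} h(y,z) f_{Z|X,Y}(z|x,y)\,dz$; the adjoint identity is a one-line Fubini computation using $f_{X|Y,Z}\,f_{Y,Z} = f_{X,Y,Z} = f_{Z|X,Y}\,f_{X,Y}$. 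Defining $T_2$ analogously on functions of $(x,z)$, interchanging the order of integration in the definitions of $k_1$ and $k_2$ then yields $K_1 = T_1^* R_1 T_1$ and $K_2 = T_2^* R_2 T_2$, so that $K_{\text{HSS}} = r\,T_1^* R_1 T_1 + (1-r)\,T_2^* R_2 T_2$, while the trivial choice $R_i = I$ gives $K_{\text{HS}} = r\,T_1^* T_1 + (1-r)\,T_2^* T_2$.

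Next I would establish the Loewner ordering $K_{\text{HS}} - K_{\text{HSS}} \succeq 0$, where I write $B \succeq 0$ to mean $\langle Bg,g\rangle \ge 0$ for all $g$. Each $R_i$ is a self-adjoint Markov operator, hence a contraction with $\norm{R_i}\le 1$, so $I - R_i \succeq 0$; therefore $\langle T_i^*(I-R_i)T_i\,g,\,g\rangle = \langle (I-R_i)T_i g,\,T_i g\rangle \ge 0$, i.e.\ $T_i^*(I-R_i)T_i \succeq 0$ for $i=1,2$. Taking the convex combination gives $K_{\text{HS}} - K_{\text{HSS}} = r\,T_1^*(I-R_1)T_1 + (1-r)\,T_2^*(I-R_2)T_2 \succeq 0$. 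I want to emphasize that this step uses only that the $R_i$ are reversible Markov operators, not that they are positive.

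For the efficiency claim (the ordering denoted $\succeq$ in the statement and $\succeq_E$ earlier) I would deduce the asymptotic-variance ordering from the Loewner ordering. Both chains are assumed Harris ergodic and reversible, so for square-integrable mean-zero $g$ the asymptotic variance admits the resolvent representation $\sigma^2_{g,K} = 2\langle g,(I-K)^{-1}g\rangle - \norm{g}^2$, interpreted through the spectral measure of $K$ and set to $+\infty$ when the integral diverges. Since $K_{\text{HS}} - K_{\text{HSS}} \succeq 0$ gives $I - K_{\text{HSS}} \succeq I - K_{\text{HS}} \succ 0$, and operator inversion is antitone on positive operators, we obtain $(I-K_{\text{HSS}})^{-1} \preceq (I-K_{\text{HS}})^{-1}$ and hence $\sigma^2_{g,K_{\text{HSS}}} \le \sigma^2_{g,K_{\text{HS}}}$ for every such $g$, which is exactly $K_{\text{HSS}} \succeq_E K_{\text{HS}}$. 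This is the HSS analogue of the efficiency half of the sandwich comparison in \citet{hobert2008theoretical}, and I expect it to be the main obstacle: one must justify the resolvent formula and its monotonicity carefully through the spectral theorem, handling functions for which the variance is infinite and spectrum accumulating at $1$ (the non-geometric case), rather than merely quoting monotonicity of $\lambda \mapsto (1+\lambda)/(1-\lambda)$.

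Finally, for the convergence-rate claim I would add the hypothesis that $R_1$ and $R_2$ are positive. Then $T_i^* R_i T_i \succeq 0$, so $K_{\text{HSS}} \succeq 0$; and $K_{\text{HS}} \succeq 0$ automatically since $T_i^* T_i \succeq 0$. For a positive self-adjoint operator the norm equals $\sup_{g \in L^2_{0,1}} \langle Kg,g\rangle$ (no absolute value is needed, the spectrum lying in $[0,\norm{K}]$), so combining this with the Loewner inequality $\langle K_{\text{HSS}}g,g\rangle \le \langle K_{\text{HS}}g,g\rangle$ and taking the supremum over $g \in L^2_{0,1}$ yields $\norm{K_{\text{HSS}}} \le \norm{K_{\text{HS}}}$. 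It is worth noting why positivity is indispensable here but was not needed for efficiency: without it the norm could be governed by a large negative eigenvalue, where the Loewner ordering gives no control, and the supremum argument would break down.
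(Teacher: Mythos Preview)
Your proof is correct and follows essentially the same route as the paper: the paper computes $\langle K_1 g, g\rangle_{L^2_0} = \langle R_1 g^*, g^*\rangle_{\hat L^2_0}$ with $g^* = T_1 g$ (your factorization $K_1 = T_1^* R_1 T_1$ expressed at the level of quadratic forms), bounds this by $\langle g^*, g^*\rangle_{\hat L^2_0}$ via Cauchy--Schwarz (equivalent to your contraction argument $I - R_i \succeq 0$), and then invokes \citet{mira1999ordering} for the efficiency ordering where you instead sketch the resolvent/spectral argument directly. The norm comparison under positivity is handled identically in both.
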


\begin{proof}
Fix $g \in L^2_0$ and define $$g^*(y,z) = \int_{\mathsf{X}} g(x,y)
f_{X|Y,Z}(x|y,z) \, dx \;.$$ It's easy to see that $g^* \in
\hat{L}^2_0.$ Now
\begin{align}
&\langle K_1 g , g \rangle_{L^2_0} \nonumber \\ &= \int_{\mathsf{X}}
  \int_{\mathsf{Y}} (K_1 g)(x,y) g(x,y) f_{X,Y}(x,y) \, dy \, dx
  \nonumber \\ &= \int_{\mathsf{X}} \int_{\mathsf{Y}} \left[
    \int_{\mathsf{X}} g(x',y) \int_{\mathsf{Z}} \int_{\mathsf{Z}}
    f_{X|Y,Z}(x'|y,z') R_1(z,dz';y) f_{Z|X,Y}(z|x,y) \, dz \, dx'
    \right] g(x,y) f_{X,Y}(x,y) \, dy \, dx \nonumber \\ &=
  \int_{\mathsf{X}} \int_{\mathsf{Y}} \int_{\mathsf{X}}
  \int_{\mathsf{Z}} \int_{\mathsf{Z}} g(x',y) f_{X|Y,Z}(x'|y,z')
  R_1(z,dz';y) f_{Z|X,Y}(z|x,y) g(x,y) f_{X,Y}(x,y) \, dz \, dx' \, dy
  \, dx \nonumber \\ &= \int_{\mathsf{Y}} \int_{\mathsf{Z}} \left[
    \int_{\mathsf{Z}} g^*(y,z') R_1(z,dz';y) \right] g^*(y,z)
  f_{Y,Z}(y,z) \, dy \, dz \nonumber \\ &= \langle R_1 g^*, g^*
  \rangle_{\hat{L}^2_0} \;.
\label{eq:cov}
\end{align}
Note that $\langle R_1 g^*, g^* \rangle_{\hat{L}^2_0}$ is the
covariance of $g^*(Y_0,Z_0)$ and $g^*(Y_1,Z_1)$ where
${(Y_n,Z_n)}_{n=0}^{\infty}$ is the \textit{stationary} version of the
Markov chain driven by $R_1$ (so $(Y_0,Z_0) \sim f_{Y,Z}$). Let
$\tilde{K}_1$ denote $K_1$ when $R_1$ is trivial. Then $\langle
\tilde{K}_1 g , g \rangle_{L^2_0} = \langle g^*, g^*
\rangle_{\hat{L}^2_0},$ which is the variance of $g^*(Y_0,Z_0)$ when
$(Y_0,Z_0) \sim f_{Y,Z}.$ Hence by Cauchy-Schwarz, $$\langle K_1 g , g
\rangle_{L^2_0} = \langle R_1 g^*, g^* \rangle_{\hat{L}^2_0} \leq
\langle g^*, g^* \rangle_{\hat{L}^2_0} = \langle \tilde{K}_1 g , g
\rangle_{L^2_0} \;.$$ An analogous argument shows that $\langle K_2 g
, g \rangle_{L^2_0} \leq \langle \tilde{K}_2 g , g \rangle_{L^2_0},$
where $\tilde{K}_2$ denotes $K_2$ with a trivial $R_2$.  Of course,
$K_{\text{HS}} = r \tilde{K}_1 + (1-r) \tilde{K}_2$.  Therefore, for
any $g \in L^2_0,$ we have
\begin{equation}
\label{dsinequality}
\langle K_{\text{HSS}} g , g \rangle_{L^2_0} = \langle (r K_1 + (1-r)
K_2) g , g \rangle_{L^2_0} \leq \langle (r \tilde{K}_1 + (1-r)
\tilde{K}_2) g , g \rangle_{L^2_0} = \langle K_{\text{HS}} g , g
\rangle_{L^2_0} \;,
\end{equation}
and it now follows from results in \citet{mira1999ordering} that
$K_{\text{HSS}} \succeq_E K_{\text{HS}}$.

Now, if $R_1$ is positive, then it follows immediately from
~\eqref{eq:cov} that $K_1$ is also positive. Of course, in an
analogous manner, positivity of $R_2$ implies that of $K_2.$ Then
since $K_{\text{HSS}}$ and $K_{\text{HS}}$ are both self-adjoint, it
follows from \eqref{dsinequality} that $\norm{K_{\text{HSS}}} \leq
\norm{K_{\text{HS}}}$.
\end{proof}

\begin{remark}
As explained in \citet{mira1999ordering}, generally fast convergence
and small asymptotic variance are conflicting goals. Indeed, a Markov
chain has a small norm when the spectrum of its operator is
concentrated near zero, whereas small asymptotic variance is
associated with a spectrum that is concentrated near -1.  When $R_1$
and $R_2$ are both positive operators, then $K_{\text{HSS}}$ and
$K_{\text{HS}}$ are also positive, which implies that their spectra
are both subsets of $[0,1]$.  In this context, fast convergence and
small asymptotic variance are both associated with a spectrum
concentrated near zero, and are no longer conflicting goals.
\end{remark}

\section{Proof of Theorem~\ref{thm:smn}}
\label{app:proof}

We begin with several lemmas. The following lemma is proved in
\citet{khare2011spectral}.

\begin{lemma}
\label{lemma:kh}
Fix $m \in \{2,3,\ldots\}$ and $p \in \mathbb{N},$ and let
$t_1,\ldots,t_m$ be vectors in $\reals^p.$ Then
\[
C_{p,m}(t_1 ; t_2, \ldots, t_m) := \sup_{c \in \reals^m_+} t_1^T
\left( t_1 t_1^T + \sum_{i=2}^m c_i t_i t_i^T + c_1 I \right)^{-2}
t_1
\]
is finite.
\end{lemma}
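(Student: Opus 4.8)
The plan is to reduce the problem to a single scalar ratio and then localize the only possible source of divergence. Write $A(c)=t_1t_1^T+c_1I+\sum_{i=2}^m c_it_it_i^T$, which is positive definite for every $c\in\reals^m_+$, so that $C_{p,m}(t_1;t_2,\dots,t_m)=\sup_c\,\norm{A(c)^{-1}t_1}^2$; if $t_1=0$ the supremum is $0$, so I assume $t_1\neq0$. Setting $M=M(c)=c_1I+\sum_{i=2}^m c_it_it_i^T$ (positive definite) and applying the Sherman--Morrison formula to peel off the \emph{fixed} rank-one term $t_1t_1^T$ gives the clean identity
\[
\norm{A^{-1}t_1}^2=\frac{\norm{M^{-1}t_1}^2}{\bigl(1+t_1^TM^{-1}t_1\bigr)^2}\;,
\]
which I would use throughout. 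Writing $\rho=\rho(c)=t_1^TM^{-1}t_1\ge0$, the relation $M\succeq c_1I$ (Loewner order) yields $M^{-2}\preceq c_1^{-1}M^{-1}$ and hence $\norm{M^{-1}t_1}^2\le\rho/c_1$. Thus the ratio can only be large when $c_1\to0$, and the entire difficulty is concentrated on that face of the orthant.

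The easy case is $t_1\notin V:=\mathrm{span}(t_2,\dots,t_m)$. Here $M$ leaves both $V$ and $V^\perp$ invariant and acts as $c_1I$ on $V^\perp$; since the component $t_1^\perp$ of $t_1$ in $V^\perp$ is nonzero, this forces $\rho\ge\norm{t_1^\perp}^2/c_1$. Combining this with $\norm{M^{-1}t_1}^2\le\rho/c_1$ gives
\[
\norm{A^{-1}t_1}^2\le\frac{\rho/c_1}{(1+\rho)^2}\le\frac{1}{c_1\rho}\le\frac{1}{\norm{t_1^\perp}^2}\;,
\]
a bound \emph{uniform} in $c$, so $C_{p,m}$ is finite with no further work.

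The remaining case $t_1\in V$ is the heart of the matter, and I would treat it by induction on $m$ after compactifying the parameter set to $[0,\infty]^m$. On the interior $c\mapsto\norm{A(c)^{-1}t_1}^2$ is continuous and finite, so if the supremum were infinite there would be a sequence $c^{(n)}$ with $\norm{A(c^{(n)})^{-1}t_1}^2\to\infty$ converging to some boundary point $c^*$, and it suffices to rule this out. If a single coordinate $c_i^*$ with $i\ge2$ tends to $0$ or $\infty$, the limiting problem is an instance of the lemma with $t_i$ deleted (for $c_i\to0$) or projected away (for $c_i\to\infty$), i.e.\ an $(m-1)$-vector instance that is finite by the inductive hypothesis (the base case $m=1$ being immediate, since $\norm{A^{-1}t_1}^2=\norm{t_1}^2/(\norm{t_1}^2+c_1)^2\le\norm{t_1}^{-2}$). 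At boundary points where $c_2,\dots,c_m$ all remain finite and positive while only $c_1$ degenerates, the situation is direct: $c_1\to\infty$ sends the ratio to $0$, and when $c_1\to0$ the matrix $M$ converges to $N^*=\sum_{i\ge2}c_i^*t_it_i^T$, whose range contains $t_1$ precisely because $t_1\in V$, so $M^{-1}t_1$ and $\rho$ converge to finite limits.

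I expect the main obstacle to be the \emph{corner} behavior: simultaneous degeneracy, in which $c_1\to0$ together with some of the $c_i$ $(i\ge2)$. The delicate point is that the reduction ``delete a degenerating $t_i$'' must then be carried out while the lower-order limiting matrix is itself becoming singular (because $c_1\to0$ as well), so one cannot simply pass to the limit and invoke continuity; controlling this near-singular reduction, using that $t_1$ lies in the span of the degenerating vectors, is the technical crux. It is also where one must genuinely exploit the geometry of the $t_i$: a naive attempt to bound the ratio through the eigenvalues $\mu_k$ of $M$ and the weights $(u_k^Tt_1)^2$ in isolation fails, since decoupling these quantities allows the ratio to diverge. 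The inductive scheme above is designed exactly so that every such corner is absorbed into a lower-order instance of the lemma, with the base case $m=1$ and the uniform bound from the case $t_1\notin V$ serving as the two anchors of the induction.
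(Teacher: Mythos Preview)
The paper does not give its own proof of this lemma; it simply cites \citet{khare2011spectral}. So there is no in-paper argument to compare against, and your proposal stands on its own.

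Your Sherman--Morrison reduction $\norm{A^{-1}t_1}^2=\norm{M^{-1}t_1}^2/(1+\rho)^2$ is correct, as is the bound $\norm{M^{-1}t_1}^2\le\rho/c_1$, and the case $t_1\notin V$ is handled cleanly with the uniform bound $\norm{t_1^\perp}^{-2}$. The genuine gap is exactly where you locate it: the corner behaviour when $t_1\in V$. Your inductive plan is to ``absorb'' a coordinate $c_i\to0$ (for some $i\ge2$) into the $(m-1)$-vector problem with $t_i$ removed, but this absorption does not follow from the inductive bound plus continuity. The map $A\mapsto\norm{A^{-1}t_1}^2$ is \emph{not} monotone under positive semidefinite rank-one perturbations: with $p=2$, $t_1=e_1$, $t_3=e_2$, $t_2=e_1+e_2$, and $c_1=c_3=\epsilon\to0$, $c_2=1$, one finds $\norm{(A')^{-1}t_1}^2\to1$ for $A'=t_1t_1^T+c_1I+c_3t_3t_3^T$, yet $\norm{A^{-1}t_1}^2\to2$ for $A=A'+c_2t_2t_2^T$. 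The resolvent identity gives only $\norm{A^{-1}t_1-(A')^{-1}t_1}\le\norm{(A')^{-1}}\cdot c_i\norm{t_i}^2\cdot\norm{A^{-1}t_1}$, and since $\norm{(A')^{-1}}$ can be of order $1/c_1$, closing this would require $c_i=o(c_1)$ along the sequence --- precisely what fails at the corners you are worried about. Compactifying to $[0,\infty]^m$ and splitting on the type of $c^*$ does not rescue this either: the limit of $\norm{A(c^{(n)})^{-1}t_1}^2$ along a sequence approaching a corner can depend on the \emph{direction} of approach, not merely on $c^*$. In short, the assertion that ``every such corner is absorbed into a lower-order instance of the lemma'' is the entire content of the case $t_1\in V$, and it remains unproved.
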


For a symmetric matrix $M$, let $\lambda^*\{M\}$ denote the largest
eigenvalue of $M$, and define the matrix norm as follows
\[
\norm{M} = \sup_{\norm{x}=1} \norm{Mx} = \sup_{\norm{x}=1} \sqrt{x^T
  M^2 x} \;.
\]
The following result is easily established.

\begin{lemma}
\label{lemma:matrix}
If $A$ is a symmetric, non-negative definite matrix, then
\[
\norm{(I + A)^{-1}} = \lambda^* \big\{ (I + A)^{-1} \big\} \le 1 \;,
\]
and $I - (I + A)^{-1}$ is non-negative definite.
\end{lemma}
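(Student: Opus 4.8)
The plan is to reduce both claims to the eigenvalues of $A$ via the spectral theorem. Since $A$ is symmetric and non-negative definite, I would write $A = P \Lambda P^T$, where $P$ is orthogonal and $\Lambda$ is the diagonal matrix of eigenvalues $\mu_1,\ldots,\mu_k$ of $A$, each satisfying $\mu_i \ge 0$. Because $\mu_i \ge 0$, every eigenvalue of $I+A$ equals $1+\mu_i \ge 1 > 0$, so $I+A$ is invertible and $(I+A)^{-1} = P(I+\Lambda)^{-1}P^T$ is symmetric with eigenvalues $(1+\mu_i)^{-1} \in (0,1]$.

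For the first assertion, I would invoke the standard fact that the operator (spectral) norm of a symmetric matrix equals the largest absolute value of its eigenvalues. Since every eigenvalue $(1+\mu_i)^{-1}$ of $(I+A)^{-1}$ is strictly positive, this largest absolute value is simply the largest eigenvalue, so $\norm{(I+A)^{-1}} = \lambda^*\{(I+A)^{-1}\} = (1+\min_i \mu_i)^{-1}$. As $\min_i \mu_i \ge 0$, this is at most $1$, which gives the displayed chain of (in)equalities.

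For the second assertion, I would note that $I - (I+A)^{-1} = P\big(I - (I+\Lambda)^{-1}\big)P^T$ is symmetric with eigenvalues $1 - (1+\mu_i)^{-1} = \mu_i/(1+\mu_i)$, each of which is non-negative because $\mu_i \ge 0$; hence $I - (I+A)^{-1}$ is non-negative definite. Equivalently, since $A$ and $(I+A)^{-1}$ commute, one can write $I - (I+A)^{-1} = A(I+A)^{-1} = A^{1/2}(I+A)^{-1}A^{1/2}$, which is manifestly non-negative definite.

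There is no genuine obstacle here; the argument is routine once $A$ is orthogonally diagonalized. The only point requiring a moment's care is the justification that the spectral norm of a symmetric matrix coincides with its largest absolute eigenvalue, but this is itself immediate from the very same diagonalization.
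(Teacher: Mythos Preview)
Your argument is correct and is precisely the standard spectral-theorem computation one would expect; the paper itself omits the proof entirely, stating only that the result ``is easily established.'' There is nothing to add.
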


Let $\{y_i\}_{i=1}^m$ and $\{x_i\}_{i=1}^m$ be the data and the
covariates, respectively, from the model in Section~\ref{sec:smn}.

\begin{lemma}
\label{lemma:drift}
Define $V: \mathbb{R}^p \times \mathbb{R}_+ \rightarrow (0,\infty)$ as
follows
\[
V(\beta, \sigma^2) = \sum_{i=1}^m (y_i - x_i^T \beta)^2 + \beta^T
\Sigma^{-1} \beta + \sigma^2 + \frac{1}{\sigma^2} \;.
\]
The function $V$ is unbounded off compact sets, i.e., the sublevel
sets of $V$ are compact.
\end{lemma}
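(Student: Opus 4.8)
The plan is to verify directly that $V$ is coercive (proper) on $\reals^p \times \reals_+$, i.e., that $V(\beta,\sigma^2) \to \infty$ whenever $(\beta,\sigma^2)$ approaches the ``boundary'' of the state space --- meaning $\norm{\beta} \to \infty$, $\sigma^2 \to \infty$, or $\sigma^2 \to 0^+$ --- and then to read off compactness of the sublevel sets. The crucial structural observation is that $V$ is a sum of four nonnegative terms, each of which controls a different escape direction: the quadratic form $\beta^T\Sigma^{-1}\beta$ pins down $\beta$, the term $\sigma^2$ keeps $\sigma^2$ from running off to $\infty$, and the term $1/\sigma^2$ keeps $\sigma^2$ bounded away from the open boundary at $0$. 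The residual term $\sum_{i=1}^m (y_i - x_i^T\beta)^2$ is merely nonnegative and plays no essential role in the bound.

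Concretely, I would fix $M > 0$ and consider the sublevel set $S_M = \{(\beta,\sigma^2) : V(\beta,\sigma^2) \le M\}$. Since all four summands are nonnegative, each is individually bounded above by $M$ on $S_M$. Because $\Sigma$ is the (positive definite) prior covariance of $\beta$, its inverse is positive definite, so $\beta^T\Sigma^{-1}\beta \ge \norm{\beta}^2 / \lambda^*\{\Sigma\}$; hence $\beta^T\Sigma^{-1}\beta \le M$ forces $\norm{\beta}^2 \le M\,\lambda^*\{\Sigma\}$, confining $\beta$ to a closed Euclidean ball. Simultaneously, $\sigma^2 \le M$ and $1/\sigma^2 \le M$ confine $\sigma^2$ to the closed interval $[1/M, M] \subset \reals_+$. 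Thus $S_M$ is contained in the product of a closed ball with $[1/M, M]$, a bounded subset of $\reals^p \times \reals_+$ sitting at a positive distance from the boundary $\sigma^2 = 0$.

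Finally I would establish closedness: $V$ is continuous on $\reals^p \times \reals_+$, and since $S_M \subset \reals^p \times [1/M, M]$, any sequence in $S_M$ converging in the ambient space $\reals^p \times \reals$ has its limit in $\reals^p \times [1/M,M] \subset \reals^p \times \reals_+$, so by continuity the limit again satisfies $V \le M$. Hence $S_M$ is closed and bounded in Euclidean space, and therefore compact. There is no serious obstacle here; the only point requiring care is to recognize that the inverse-moment term $1/\sigma^2$ is precisely what prevents mass from accumulating at the open boundary $\sigma^2 = 0$, so that $S_M$ remains a \emph{compact} subset of $\reals_+$ rather than merely a bounded one --- which is why $V$ was defined to include both $\sigma^2$ and $1/\sigma^2$.
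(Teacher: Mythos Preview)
Your proof is correct and follows essentially the same route as the paper: bound each nonnegative summand of $V$ by the level, use positive definiteness of $\Sigma^{-1}$ to confine $\beta$, use $\sigma^2 \le M$ and $1/\sigma^2 \le M$ to trap $\sigma^2$ in a compact subinterval of $\reals_+$, and invoke continuity for closedness. You are simply more explicit than the paper about the eigenvalue bound and about why the open boundary at $\sigma^2 = 0$ causes no trouble.
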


\begin{proof}
We must show that for every $d \ge 0$, the set
\[
S_d = \bigg \{ (\beta,\sigma^2) \in \reals^p \times \reals_+ :
V(\beta, \sigma^2) = \sum_{i=1}^m (y_i - x_i^T \beta)^2 + \beta^T
\Sigma^{-1} \beta + \sigma^2 + \frac{1}{\sigma^2} \le d \bigg\}
\]
is compact.  Since $V$ is continuous, it suffices to show that
$|\beta_i|$ is bounded for all $i \in \{1,2,\ldots,p\}$ and that
$\sigma^2$ is bounded away from $0$ and $\infty.$ Since $\Sigma$ is
positive definite, $\beta^T \Sigma^{-1} \beta \le d$ implies that
$|\beta_i|$ is bounded for all $i \in \{1,2,\ldots,p\}.$ Also,
$\sigma^2 + \frac{1}{\sigma^2} \le d$ implies that $\sigma^2$ is
bounded away from $0$ and $\infty$.
\end{proof}

\begin{lemma}
\label{lemma:hybridge}
If the hybrid scan Gibbs sampler is geometrically ergodic for some
selection probability $r^* \in (0,1),$ then it is geometrically
ergodic for every selection probability $r \in (0,1)$.
\end{lemma}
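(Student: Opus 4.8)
The plan is to argue entirely at the level of Markov operators on $L^2_0$, exploiting the fact that the HS Gibbs operator is \emph{affine} in the selection probability. Let $\tilde{K}_1$ and $\tilde{K}_2$ denote the two block operators of the HS Gibbs sampler (in the notation of Appendix~\ref{app:theory}, these are the operators obtained with a trivial $R$), so that the HS operator at selection probability $r$ is $K_r = r\tilde{K}_1 + (1-r)\tilde{K}_2$; crucially, $\tilde{K}_1$ and $\tilde{K}_2$ do not depend on $r$. Each $\tilde{K}_i$ is self-adjoint, and by \eqref{eq:cov} with a trivial $R$ we have $\langle \tilde{K}_i g, g\rangle_{L^2_0} = \langle g^*, g^*\rangle_{\hat{L}^2_0}\ge 0$, so each $\tilde{K}_i$ is positive with $0 \le \langle \tilde{K}_i g, g\rangle_{L^2_0} \le \norm{g}^2$. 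Hence $K_r$ is positive and self-adjoint, and
\[
\norm{K_r} = \sup_{g \in L^2_{0,1}} \big[\, r\,\langle \tilde{K}_1 g, g\rangle_{L^2_0} + (1-r)\,\langle \tilde{K}_2 g, g\rangle_{L^2_0}\,\big].
\]
Since $\Phi$ is geometrically ergodic at $r$ if and only if $\norm{K_r} < 1$ \citep{roberts1997geometric}, it suffices to transfer the strict inequality $\norm{K_{r^*}} < 1$ to an arbitrary $r \in (0,1)$.

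Next I would record the hypothesis as a uniform gap. Put $\rho^* = \norm{K_{r^*}} < 1$ and $\delta = 1 - \rho^* > 0$, and for $g \in L^2_{0,1}$ write $a(g) = \langle \tilde{K}_1 g, g\rangle_{L^2_0}$ and $b(g) = \langle \tilde{K}_2 g, g\rangle_{L^2_0}$, both lying in $[0,1]$. Because $r^* a(g) + (1-r^*) b(g) \le \norm{K_{r^*}} = \rho^*$ for every such $g$, it follows that
\[
r^*\,u(g) + (1-r^*)\,v(g) \;\ge\; \delta, \qquad u(g) := 1 - a(g) \ge 0,\quad v(g) := 1 - b(g) \ge 0.
\]

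The key step is a single convexity estimate, uniform in $g$. For fixed $r \in (0,1)$ and any $g \in L^2_{0,1}$,
\[
r\,u(g) + (1-r)\,v(g) = \tfrac{r}{r^*}\big(r^* u(g)\big) + \tfrac{1-r}{1-r^*}\big((1-r^*) v(g)\big) \ge \min\Big\{\tfrac{r}{r^*},\,\tfrac{1-r}{1-r^*}\Big\}\big[\,r^* u(g) + (1-r^*) v(g)\,\big] \ge c,
\]
where $c := \delta\,\min\{ r/r^*,\,(1-r)/(1-r^*)\} > 0$ and the first inequality uses only $r^* u(g) \ge 0$ and $(1-r^*) v(g) \ge 0$ together with the elementary bound $c_1 p + c_2 q \ge \min(c_1,c_2)(p+q)$ for $p,q \ge 0$. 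Since $r u(g) + (1-r)v(g) = 1 - [\,r a(g) + (1-r) b(g)\,]$, taking the supremum over $g \in L^2_{0,1}$ yields $\norm{K_r} \le 1 - c < 1$, i.e.\ geometric ergodicity at $r$.

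The only thing requiring care is that the bound be genuinely uniform in $g$, and the device that guarantees this is to throw away the upper constraints $a,b \le 1$ and retain only the nonnegativity of $u$ and $v$; this reduces the comparison of the functionals $r u + (1-r)v$ and $r^* u + (1-r^*)v$ to the trivial inequality above, with a constant $c$ depending on $r,r^*,\delta$ but not on $g$. The remaining points are routine: positivity of $K_r$ lets us replace $\sup_g|\langle K_r g, g\rangle_{L^2_0}|$ by $\sup_g \langle K_r g, g\rangle_{L^2_0}$, and the affine dependence $K_r = r\tilde{K}_1 + (1-r)\tilde{K}_2$ rests on the block operators being independent of $r$. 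I do not expect any genuine obstacle here beyond this bookkeeping.
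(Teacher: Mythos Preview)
Your argument is correct, but it takes a somewhat different route from the paper's. The paper works pointwise at the level of transition functions: it observes directly that
\[
K_{\text{HS},r}\big((x,y),A\big) \;\ge\; \min\!\Big(\tfrac{r}{r^*},\,\tfrac{1-r}{1-r^*}\Big)\,K_{\text{HS},r^*}\big((x,y),A\big)
\]
for every state $(x,y)$ and measurable set $A$, and then invokes Theorem~1 of \citet{jones2014convergence} (together with reversibility) to transfer geometric ergodicity from $r^*$ to $r$. You instead work spectrally on $L^2_0$: you use positivity and self-adjointness of the block operators $\tilde{K}_i$ to write $\norm{K_r}=\sup_g\langle K_r g,g\rangle_{L^2_0}$, and then apply the same convex-combination inequality to the quadratic forms $a(g),b(g)$, concluding via the \citet{roberts1997geometric} equivalence $\norm{K_r}<1\Leftrightarrow$ geometric ergodicity. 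Both proofs hinge on exactly the same factor $\min\{r/r^*,(1-r)/(1-r^*)\}$; the difference is where the inequality is applied. Your approach is self-contained given Roberts--Rosenthal and fits naturally with the operator machinery already set up in Appendix~\ref{app:theory}, but it does rely on the positivity of $\tilde{K}_1,\tilde{K}_2$ (which you correctly derive from \eqref{eq:cov} with trivial $R$). The paper's pointwise-domination argument avoids that extra ingredient and would go through for any affine mixture of Markov kernels, positive or not, at the cost of citing an external comparison theorem.
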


\begin{proof}
The Mtf of the HS chain (with selection probability $r$) is given by
\begin{align*}
K_{\text{HS, $r$}}((x,y),A) &= r \int_{\mathsf{X}} I_A(x',y)
\int_{\mathsf{Z}} f_{X|Y,Z}(x'|y,z) f_{Z|X,Y}(z|x,y) \, dz \, dx'
\\ &+ (1-r) \int_{\mathsf{Y}} I_A(x,y') \int_{\mathsf{Z}}
f_{Y|X,Z}(y'|x,z) f_{Z|X,Y}(z|x,y) \, dz \, dy'.
\end{align*}
It is easy to show that
\[
K_{\text{HS, $r$}}((x,y),A) \geq \text{min} \left( \frac{r}{r^*},
\frac{1-r}{1-r^*} \right) K_{\text{HS, $r^*$}}((x,y),A) \;,
\]
and thus
\[
K_{\text{HS, $r$}}((x,y),A) \geq \delta' K_{\text{HS, $r^*$}}((x,y),A)
\]
for all measurable sets $A$ and all $(x,y) \in \mathsf{X} \times
\mathsf{Y}$, where $\delta' = \text{min} \left( \frac{r}{r^*},
\frac{1-r}{1-r^*} \right) > 0$.  Since the HS chain is reversible,
Theorem 1 in \citet{jones2014convergence} implies the result.
\end{proof}

\begin{proof}[Proof of Theorem~\ref{thm:smn}]
In view of Lemma~\ref{lemma:drift} above and Lemma 15.2.8 of
\citet{meyn2012markov}, in each case it suffices to verify the
geometric drift condition for the function
\[
V(\beta, \sigma^2) = \sum_{i=1}^m (y_i - x_i^T \beta)^2 + \beta^T
\Sigma^{-1} \beta + \sigma^2 + \frac{1}{\sigma^2} \;,
\]
i.e., we must show that
\[
E(V(\beta, \sigma^2) | \hat{\beta}, \hat{\sigma}^2) \le \lambda
V(\hat{\beta}, \hat{\sigma}^2) + L
\]
for some constants $\lambda \in [0,1)$ and $L \in \reals$, where for
  part (i) of the theorem the expectation is taken with respect to the
  Mtf of the SS Gibbs chain, and for part (ii) of the theorem the
  expectation is taken with respect to the Mtf of the HS chain.  We
  begin with the SS Gibbs algorithm.
\begin{align*}
E(V(\beta, \sigma^2) | \hat{\beta}, \hat{\sigma}^2) & =
\int_{\reals_+} \int_{\reals^p} V(\beta, \sigma^2) \bigg[
  \int_{\reals^m_+} \pi(\beta | \sigma^2, z, y) \pi(\sigma^2 |
  \hat{\beta}, z, y) \pi(z| \hat{\beta}, \hat{\sigma}^2, y) \, dz
  \bigg] \, d\beta \, d\sigma^2 \\ &= \int_{\reals^m_+} \left[
  \int_{\reals_+} \left\{ \int_{\reals^p} V(\beta, \sigma^2) \pi(\beta
  | \sigma^2, z, y) \, d\beta \right\} \pi(\sigma^2 | \hat{\beta}, z,
  y) \, d\sigma^2 \right] \pi(z| \hat{\beta}, \hat{\sigma}^2, y) \,
dz.
\end{align*}
We have
\begin{align*}
\sum_{i=1}^m (y_i - x_i^T \beta)^2 + \beta^T \Sigma^{-1} \beta &=
\norm{y-X\beta}^2 + \beta^T \Sigma^{-1} \beta \\ &\leq 2\norm{y}^2 +
2\norm{X\beta}^2 + \beta^T \Sigma^{-1} \beta \\ &= 2\norm{y}^2 +
2\norm{X \Sigma^{\frac{1}{2}} \Sigma^{-\frac{1}{2}} \beta}^2 + \beta^T
\Sigma^{-1} \beta \\ &\leq 2\norm{y}^2 + (2 \norm{X
  \Sigma^{\frac{1}{2}}}^2 + 1) \norm{\Sigma^{-\frac{1}{2}} \beta}^2.
\end{align*}
Let $\tilde{X} = X\Sigma^{\frac{1}{2}},$ let $\tilde{x}_i$ be the
$i$th column of $\tilde{X}^T,$ and let $\tilde{Q}$ be an $m \times m$
diagonal matrix whose $i$th diagonal element is $\sigma^2 z_i^{-1}$.
Then, given $(\sigma^2,z,y),$ $\Sigma^{-\frac{1}{2}} \beta$ is a
multivariate normal random vector with mean $(\tilde{X}^T
\tilde{Q}^{-1} \tilde{X} + I)^{-1} (\tilde{X}^T \tilde{Q}^{-1} y +
\Sigma^{-\frac{1}{2}} \mu)$, and covariance matrix $(\tilde{X}^T
\tilde{Q}^{-1} \tilde{X} + I)^{-1}.$ It follows from
Lemma~\ref{lemma:kh} that for each $i \in \{1,2,\ldots,m\}$ and for
all $z \in \reals^m_+$,
\[
\tilde{x}_i^T \left( \tilde{x}_i \tilde{x}_i^T + \sum_{j \neq i}
\frac{z_j}{z_i} \tilde{x}_j \tilde{x}_j^T + \frac{\sigma^2}{z_i} I
\right)^{-2} \tilde{x}_i \leq C_i(\tilde{X}) \;,
\]
where $C_i(\tilde{X})$ is a finite constant. Recall that if $A$ and
$B$ are symmetric matrices of the same dimension such that $A - B$ is
non-negative definite, then $\text{tr}(A) \geq \text{tr}(B).$ Then, we
have
\begin{align*}
&E\left[ \norm{\Sigma^{-\frac{1}{2}} \beta}^2 \Bigm| \sigma^2, z, y
    \right] \\ &= \norm{(\tilde{X}^T \tilde{Q}^{-1} \tilde{X} +
    I)^{-1} (\tilde{X}^T \tilde{Q}^{-1} y + \Sigma^{-\frac{1}{2}}
    \mu)}^2 + \text{tr}((\tilde{X}^T \tilde{Q}^{-1} \tilde{X} +
  I)^{-1}) \\ &\leq 2 \norm{(\tilde{X}^T \tilde{Q}^{-1} \tilde{X} +
    I)^{-1} \tilde{X}^T \tilde{Q}^{-1} y}^2 + 2 \norm{(\tilde{X}^T
    \tilde{Q}^{-1} \tilde{X} + I)^{-1} \Sigma^{-\frac{1}{2}} \mu}^2 +
  \text{tr}((\tilde{X}^T \tilde{Q}^{-1} \tilde{X} + I)^{-1}) \\ &\leq
  2 \norm{(\tilde{X}^T \tilde{Q}^{-1} \tilde{X} + I)^{-1} \tilde{X}^T
    \tilde{Q}^{-1} y}^2 + 2\norm{(\tilde{X}^T \tilde{Q}^{-1} \tilde{X}
    + I)^{-1}}^2 \norm{\Sigma^{-\frac{1}{2}} \mu}^2 +
  \text{tr}((\tilde{X}^T \tilde{Q}^{-1} \tilde{X} + I)^{-1}) \\ &\leq
  2 \norm{(\tilde{X}^T \tilde{Q}^{-1} \tilde{X} + I)^{-1} \tilde{X}^T
    \tilde{Q}^{-1} y}^2 + 2\norm{\Sigma^{-\frac{1}{2}} \mu}^2 +
  \text{tr}(I) \\ &= 2 \left\Vert\sum_{i=1}^m \left( \sum_{j=1}^m
  \frac{z_j \tilde{x}_j \tilde{x}_j^T}{\sigma^2} + I \right)^{-1}
  \frac{z_i \tilde{x}_i y_i}{\sigma^2}\right\Vert^2 +
  2\norm{\Sigma^{-\frac{1}{2}} \mu}^2 + p \\ & \leq 2 \left(
  \sum_{i=1}^m \left\Vert\left( \frac{z_i \tilde{x}_i
    \tilde{x}_i^T}{\sigma^2} + \sum_{j \neq i} \frac{z_j \tilde{x}_j
    \tilde{x}_j^T}{\sigma^2} + I \right)^{-1} \frac{z_i \tilde{x}_i
    y_i}{\sigma^2}\right\Vert \right)^2 + 2\norm{\Sigma^{-\frac{1}{2}}
    \mu}^2 + p \\ &= 2 \left( \sum_{i=1}^m |y_i| \left\Vert \left(
  \tilde{x}_i \tilde{x}_i^T + \sum_{j \neq i} \frac{z_j}{z_i}
  \tilde{x}_j \tilde{x}_j^T + \frac{\sigma^2}{z_i} I \right)^{-1}
  \tilde{x}_i \right\Vert \right)^2 + 2\norm{\Sigma^{-\frac{1}{2}}
    \mu}^2 + p \\ &= 2 \left( \sum_{i=1}^m |y_i| \sqrt{\tilde{x}_i^T
    \left( \tilde{x}_i \tilde{x}_i^T + \sum_{j \neq i} \frac{z_j}{z_i}
    \tilde{x}_j \tilde{x}_j^T + \frac{\sigma^2}{z_i} I \right)^{-2}
    \tilde{x}_i} \right)^2 + 2\norm{\Sigma^{-\frac{1}{2}} \mu}^2 + p
  \\ &\leq 2 \left( \sum_{i=1}^m |y_i| \sqrt{C_i(\tilde{X})} \right)^2
  + 2\norm{\Sigma^{-\frac{1}{2}} \mu}^2 + p,
\end{align*}
where the third inequality follows from Lemma~\ref{lemma:matrix}.
Therefore, $E\left[ \norm{\Sigma^{-\frac{1}{2}} \beta}^2 \Bigm|
  \sigma^2, z, y \right]$ is bounded above by a finite constant that
we will call $D$.  Therefore, we have
\begin{equation}
  \label{eq:t1p1}
  \int_{\reals^p} \bigg[ \sum_{i=1}^m (y_i - x_i^T \beta)^2 + \beta^T
    \Sigma^{-1} \beta \bigg] \pi(\beta | \sigma^2, z, y) \, d\beta \le
  2\norm{y}^2 + D \big( 2 \norm{X \Sigma^{\frac{1}{2}}}^2 + 1 \big)
  \;.
\end{equation}
Now, recall that $\sigma^2 | \beta, z, y \sim \mbox{IG} \Big(
\frac{m}{2} + \alpha, \frac{(y-X\beta)^T Q^{-1} (y-X\beta) +
  2\gamma}{2} \Big)$.  It follows that
\begin{equation}
  \label{eq:t1p2}
  E\big[ (\sigma^2)^{-1} | \hat{\beta},z,y \big] =
  \frac{m+2\alpha}{(y-X\hat{\beta})^T Q^{-1} (y-X\hat{\beta}) +
    2\gamma} = \frac{m+2\alpha}{\sum_{i=1}^n z_i (y_i - x_i^T
    \hat{\beta})^2 + 2\gamma} \leq \frac{m+2\alpha}{2\gamma} \;.
\end{equation}
And since $\frac{m}{2} + \alpha > 1$, we have
\begin{equation*}
  E(\sigma^2 | \hat{\beta},z,y) = \frac{(y-X\hat{\beta})^T Q^{-1}
    (y-X\hat{\beta}) + 2\gamma}{m+2\alpha-2} = \frac{\sum_{i=1}^m z_i
    (y_i - x_i^T \hat{\beta})^2 + 2\gamma}{m+2\alpha-2} \;.
\end{equation*}
Our assumption then implies that 
\begin{equation}
  \label{eq:t1p3}
\int_{\reals^m_+} \left[ \int_{\reals_+} \sigma^2 \pi(\sigma^2 |
  \hat{\beta}, z, y) \, d\sigma^2 \right] \pi(z| \hat{\beta},
\hat{\sigma}^2, y) \, dz \le \frac{2\gamma}{m+2\alpha-2} + \psi_1
V(\hat{\beta},\hat{\sigma}^2) + L_1 \;,
\end{equation}
where $\psi_1 \in [0,1)$ and $L_1 \in \mathbb{R}$.
  Combining~\eqref{eq:t1p1}, ~\eqref{eq:t1p2}, and ~\eqref{eq:t1p3} we
  have
\begin{equation*}
  E(V(\beta, \sigma^2) | \hat{\beta}, \hat{\sigma}^2) \le \psi_1
  V(\hat{\beta},\hat{\sigma}^2) + C \;,
\end{equation*}
where
\[
C = 2\norm{y}^2 + D \big( 2 \norm{X \Sigma^{\frac{1}{2}}}^2 + 1 \big)
+ \frac{m+2\alpha}{2\gamma} + \frac{2\gamma}{m+2\alpha-2} + L_1 \;,
\]
and hence the SS Gibbs Markov chain is geometrically ergodic.

Now for the HS algorithm, we have
\begin{align*}
E(V(\beta, \sigma^2) | \hat{\beta}, \hat{\sigma}^2) = r
\int_{\reals^m_+} \int_{\reals_+} & V(\hat{\beta}, \sigma^2)
\pi(\sigma^2 | \hat{\beta}, z, y) \pi(z| \hat{\beta}, \hat{\sigma}^2,
y) \, d\sigma^2 \, dz \\ & + (1-r) \int_{\reals^m_+} \int_{\reals^p}
V(\beta, \hat{\sigma}^2) \pi(\beta | \hat{\sigma}^2, z, y) \pi(z|
\hat{\beta}, \hat{\sigma}^2, y) \, d\beta \, dz \;.
\end{align*}
Equation~\eqref{eq:t1p1} implies that
\begin{align*}
\int_{\reals^m_+} \int_{\reals^p} V(\beta, \hat{\sigma}^2) \pi(\beta |
\hat{\sigma}^2, z, y) & \pi(z| \hat{\beta}, \hat{\sigma}^2, y) \,
d\beta \, dz \\ & \le 2\norm{y}^2 + D \big( 2 \norm{X
  \Sigma^{\frac{1}{2}}}^2 + 1 \big) + \hat{\sigma}^2 +
\frac{1}{\hat{\sigma}^2} \;.
\end{align*}
Equations~\eqref{eq:t1p2} and ~\eqref{eq:t1p3} imply that
\begin{align*}
\int_{\reals^m_+} \int_{\reals_+} & V(\hat{\beta}, \sigma^2)
\pi(\sigma^2 | \hat{\beta}, z, y) \pi(z| \hat{\beta}, \hat{\sigma}^2,
y) \, d\sigma^2 \, dz \le \sum_{i=1}^m (y_i - x_i^T \hat{\beta})^2 +
\hat{\beta}^T \Sigma^{-1} \hat{\beta} \; + \\ &
\frac{m+2\alpha}{2\gamma} + \frac{2\gamma}{m+2\alpha-2} +
\frac{1}{m+2\alpha-2} \sum_{i=1}^m (y_i - x_i^T \hat{\beta})^2
\int_{\reals_+} z_i \, \pi(z_i| \hat{\beta}, \hat{\sigma}^2, y) \,
dz_i \;.
\end{align*}
By assumption, we have $\psi_2 \in \mathbb{R}_+$, $\psi_3 \in [0,1)$
  and $L_2 \in \mathbb{R}$ such that
\begin{align*}
\int_{\reals^m_+} \int_{\reals_+} & V(\hat{\beta}, \sigma^2)
\pi(\sigma^2 | \hat{\beta}, z, y) \pi(z| \hat{\beta}, \hat{\sigma}^2,
y) \, d\sigma^2 \, dz \le \sum_{i=1}^m (y_i - x_i^T \hat{\beta})^2 +
\hat{\beta}^T \Sigma^{-1} \hat{\beta} \; + \\ &
\frac{m+2\alpha}{2\gamma} + \frac{2\gamma}{m+2\alpha-2} + \psi_2
\sum_{i=1}^m (y_i - x_i^T \hat{\beta})^2 + \psi_2 \hat{\beta}^T
\Sigma^{-1} \hat{\beta} + \psi_3 \Big( \hat{\sigma}^2 +
\frac{1}{\hat{\sigma}^2} \Big) + L_2 \;.
\end{align*}
Putting all of this together, we have
\begin{align*}
E(V(\beta, \sigma^2) | \hat{\beta}, \hat{\sigma}^2) \le r (\psi_2 + 1)
\bigg( \sum_{i=1}^m (y_i - x_i^T \hat{\beta})^2 \bigg) + & r (\psi_2
+1) \hat{\beta}^T \Sigma^{-1} \hat{\beta} \; + \\ & \Big[ r \psi_3 +
  (1-r) \Big] \Big( \hat{\sigma}^2 + \frac{1}{\hat{\sigma}^2} \Big) +
C' \;,
\end{align*}
where 
\begin{equation*}
C' = (1-r) \Big[ 2\norm{y}^2 + D \big( 2 \norm{X
    \Sigma^{\frac{1}{2}}}^2 + 1 \big) \Big] +
\frac{r(m+2\alpha)}{2\gamma} + \frac{2r\gamma}{m+2\alpha-2} + r L_2
\;.
\end{equation*}
Hence,
\begin{equation*}
E(V(\beta, \sigma^2) | \hat{\beta}, \hat{\sigma}^2) \le \max \Big \{
r(1+\psi_2), r \psi_3 + (1-r) \Big \} V(\hat{\beta}, \hat{\sigma}^2) +
C' \;,
\end{equation*}
and, since $r \psi_3 + (1-r) < 1$ for all $r \in (0,1)$, we have a
valid geometric drift condition as long as $r < (1+\psi_2)^{-1}$.
Finally, an appeal to Lemma~\ref{lemma:hybridge} completes the proof.
\end{proof}

\end{appendix}

\bibliographystyle{ims}
\bibliography{hybrid}

\end{document}